\newtheorem{theorem}{Theorem}[section]
\newtheorem{proposition}[theorem]{Proposition}
\newtheorem{corollary}[theorem]{Corollary}
\newtheorem{lemma}[theorem]{Lemma}
\theoremstyle{definition}
\newtheorem{definition}[theorem]{Definition}
\newtheorem{example}[theorem]{Example}
\newtheorem{remark}[theorem]{Remark}
\renewcommand{\AA}{\mathbb{A} }
\newcommand{\CC}{\mathbb{C} }
\newcommand{\PP}{\mathbb{P} }
\newcommand{\RR}{\mathbb{R} }
\newcommand{\cS}{\mathcal{S} }
\def\GL{\mathrm{GL}}
\def\SL{\mathrm{SL}}
\def\Gr{\mathrm{Gr}}
\def\rk{\mathrm{rank}\, }
\def\im{\mathrm{im}\, }
\def\Trop{\mathrm{Trop}}
\def\trop{\mathrm{trop}}
\begin{document}

\title{Point configurations, phylogenetic trees, and dissimilarity vectors}
\keywords{tropical Grassmannian, dissimilarity vector, phylogenetic tree, rational normal curve}
\subjclass[2010]{05C05, 14M15, 14N10, 14T15}

\author{Alessio Caminata}
\address{Alessio Caminata, Dipartimento di Matematica, Universit\`a di Genova\\ via Dodecaneso 35, 16146, Genova, Italy}
\email{caminata@dima.unige.it}

\author{Noah Giansiracusa}
\address{Noah Giansiracusa, Department of Mathematical Sciences, Bentley University, Waltham, MA 02452, USA}
\email{ngiansiracusa@bentley.edu}

\author{Han-Bom Moon}
\address{Han-Bom Moon, Department of Mathematics, Fordham University, New York, NY 10023, USA}
\email{hmoon8@fordham.edu}

\author{Luca Schaffler}
\address{Luca Schaffler, Department of Mathematics, KTH Royal Institute of Technology, SE-100 44 Stockholm, Sweden}
\email{lucsch@math.kth.se}

\maketitle

\begin{abstract}
In 2004 Pachter and Speyer introduced the higher dissimilarity maps for phylogenetic trees and asked two important questions about their relation to the tropical Grassmannian.  Multiple authors, using independent methods, answered affirmatively the first of these questions, showing that dissimilarity vectors lie on the tropical Grassmannian, but the second question, whether the set of dissimilarity vectors forms a tropical subvariety, remained opened.  We resolve this question by showing that the tropical balancing condition fails.  However, by replacing the definition of the dissimilarity map with a weighted variant, we show that weighted dissimilarity vectors form a tropical subvariety of the tropical Grassmannian in exactly the way that Pachter--Speyer envisioned.  Moreover, we provide a geometric interpretation in terms of configurations of points on rational normal curves and construct a finite tropical basis that yields an explicit characterization of weighted dissimilarity vectors.
\end{abstract}

\section{Introduction}\label{sec:introduction}

\subsection{Background}\label{ssec:background}

In one of the first papers on tropical geometry, Speyer and Sturmfels \cite{SS04} introduced the tropical Grassmannian and showed that $\Gr^{\trop}(2,n)\subseteq \mathbb{R}^{\binom{n}{2}}$ coincides with the space of $n$-leaf phylogenetic trees, a tropical analogue of the moduli space of stable rational $n$-pointed curves that plays an important role in genomics.  With this Euclidean embedding, each phylogenetic tree is identified with its dissimilarity vector, the $\binom{n}{2}$-tuple of path lengths connecting each pair of the $n$ leaves.  

Pachter and Speyer \cite{PS04} generalized this embedding by introducing the higher dissimilarity maps: for each integer $r$ with $2 \le r \le \frac{n+1}{2}$ they showed that any phylogenetic tree can be recovered from its $r$-dissimilarity vector, the $\binom{n}{r}$-tuple recording the sum of edge lengths in the subtree spanned by each subset of $r$ leaves.  They also stated two questions concerning the possible tropical geometry of these higher dissimilarity maps: (1) Is the space of $r$-dissimilarity vectors in $\mathbb{R}^{\binom{n}{r}}$ contained in the tropical Grassmannian $\Gr^{\trop}(r,n)$? And if so, then: (2) Is there a rational map $\Gr(2,n) \dashrightarrow \Gr(r,n)$ whose image tropicalizes to yield the space of $r$-dissimilarity vectors? The first question was answered positively by several authors using distinct methods \cite{Coo09,Gir10,Man11}, whereas the second question has remained open other than the case $r=3$ that was confirmed in the original \cite{PS04}.  There have been numerous papers studying other aspects of Pachter--Speyer's higher dissimilarity maps as well (e.g., \cite{EL18, BR17, BR14, Rub12, HM12, BC09, Rub07, LYP06}).  

In this paper we resolve the second question of Pachter--Speyer and introduce and study a variant of the higher dissimilarity maps that is more compatible with tropical geometry.

\subsection{Statement of results}\label{ssec:results}

By direct calculation we provide a negative answer to the second tropical question of Pachter--Speyer (recall that the first open case is $r=4$, $n=7$):

\begin{theorem}[Theorem~\ref{prop:nonbal}]\label{thm:nottropicalvar}
For $n =7$ the space of $4$-dissimilarity vectors in $\mathbb{R}^{\binom{7}{4}}$ is a polyhedral complex that is not balanced, for any choice of weights on the facets, hence is not a tropical variety.
\end{theorem}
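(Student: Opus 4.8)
The plan is to locate a single ridge of the polyhedral complex $\mathcal{D}\subseteq\mathbb{R}^{\binom{7}{4}}=\mathbb{R}^{35}$ of $4$-dissimilarity vectors at which the balancing condition fails for every assignment of positive weights. Recall that $\mathcal{D}$ is a fan: for each trivalent $7$-leaf tree topology $T$ (which has $11$ edges) the $4$-dissimilarity map is linear in the edge lengths, with $d_S=\sum_{e}\ell_e$ summed over the edges $e$ separating the $4$-subset $S$, so $T$ contributes an $11$-dimensional cone; since a tree is recoverable from its $4$-dissimilarity vector, the map is injective and $\mathcal{D}$ inherits the face poset of the space of phylogenetic trees. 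In particular each ridge is the image of a tree with a single degree-$4$ vertex ($10$ edges, a $10$-dimensional cone) and is adjacent to exactly three facets, coming from the three ways to resolve that vertex. Balancing at a ridge $\rho$ then asks whether there exist positive weights $w_1,w_2,w_3$ with $w_1u_1+w_2u_2+w_3u_3\in\mathrm{span}(\rho)$, where $u_i\in\mathbb{R}^{35}$ is the image of the unit generator of the internal edge resolving the degree-$4$ vertex in the $i$-th resolution.

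I would take the degenerate tree whose degree-$4$ vertex carries the four branches $\{1,2\}$, $\{3,4\}$, $\{5\}$, $\{6,7\}$. In the $i$-th resolution the new internal edge induces a split of the leaves, and $d_S$ picks up this edge precisely when $S$ meets both sides of the split; a direct check gives
\[
u_1=\mathbf{1}-\mathbf{e}_{\{1,2,3,4\}},\quad u_2=\mathbf{1}-\mathbf{e}_{\{3,4,6,7\}},\quad u_3=\mathbf{1}-\mathbf{e}_{\{1,2,6,7\}},
\]
where $\mathbf{1}$ is the all-ones vector and $\mathbf{e}_S$ is the standard basis vector indexed by $S$. Writing $S_1,S_2,S_3$ for these three $4$-subsets (the pairwise unions of the cherries $\{1,2\},\{3,4\},\{6,7\}$), I note that $\mathrm{span}(\rho)$ is generated by the $10$ edge directions, among which the seven pendant directions $\chi_i$ (with $\chi_i(S)=[i\in S]$) satisfy $\sum_{i=1}^{7}\chi_i=4\cdot\mathbf{1}$; hence $\mathbf{1}\in\mathrm{span}(\rho)$ and $u_i\equiv-\mathbf{e}_{S_i}$ modulo $\mathrm{span}(\rho)$. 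Balancing at $\rho$ is therefore equivalent to the existence of a strictly positive linear relation among $\mathbf{e}_{S_1},\mathbf{e}_{S_2},\mathbf{e}_{S_3}$ modulo $\mathrm{span}(\rho)$.

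To finish I would exploit symmetry. The configuration is invariant under the group $\Sigma_3$ permuting the three cherries, and this action permutes $S_1,S_2,S_3$; hence the space of linear relations among $\mathbf{e}_{S_1},\mathbf{e}_{S_2},\mathbf{e}_{S_3}$ modulo $\mathrm{span}(\rho)$ is a $\Sigma_3$-subrepresentation of $\mathbb{R}^3$, so it sits inside the decomposition into the trivial line $\langle(1,1,1)\rangle$ and the plane $\{a+b+c=0\}$. Since the latter contains no strictly positive vector, a positive relation can exist only if $(1,1,1)$ is itself a relation, i.e. only if $\mathbf{e}_{S_1}+\mathbf{e}_{S_2}+\mathbf{e}_{S_3}\in\mathrm{span}(\rho)$. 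This last membership is a finite linear-algebra question that I would settle by direct calculation: positing $\mathbf{e}_{S_1}+\mathbf{e}_{S_2}+\mathbf{e}_{S_3}=\sum_i c_i\chi_i+c_A\psi_A+c_B\psi_B+c_D\psi_D$ (with $\psi_A(S)=[S\cap\{1,2\}\neq\emptyset]$, and $\psi_B,\psi_D$ defined analogously) and evaluating both sides at a short list of $4$-subsets forces $c_A=c_B=c_D=0$ and $c_4=c_6$; the subsets $S_1=\{1,2,3,4\}$ and $\{1,2,3,6\}$ then give $c_1+c_2+c_3+c_4=1$ and $c_1+c_2+c_3+c_6=0$, whence $c_4-c_6=1$, contradicting $c_4=c_6$. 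Equivalently one exhibits a single functional in $\mathrm{span}(\rho)^{\perp}$ whose values on $S_1,S_2,S_3$ have nonzero sum. Thus no positive weights balance $\rho$, and $\mathcal{D}$ is not a tropical variety.

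I expect the main obstacle to be conceptual rather than computational: one must be sure that the \emph{embedded} fan $\mathcal{D}$ really does carry the face structure of tree space, so that the local picture at $\rho$ genuinely consists of three facets with lineality exactly $\mathrm{span}(\rho)$ (this is where injectivity of the $4$-dissimilarity map is essential), and one must resist the temptation to inherit balancing from $\Gr^{\trop}(2,7)$. Because the $4$-dissimilarity map is only piecewise linear, the three facets at $\rho$ are images under three \emph{different} linear maps, so the balancing of $\Gr^{\trop}(2,7)$ gives no information; it is precisely the explicit verification that $\mathbf{e}_{S_1}+\mathbf{e}_{S_2}+\mathbf{e}_{S_3}\notin\mathrm{span}(\rho)$ that detects the failure.
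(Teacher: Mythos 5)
Your proposal is correct, and at the strategic level it coincides with the paper's proof: both select the same combinatorial type of ridge, namely the cone of the $7$-leaf tree with a single degree-$4$ vertex carrying three cherries and a singleton leaf (your labelling $\{1,2\},\{3,4\},\{5\},\{6,7\}$ is the paper's Figure~1 up to relabelling), note that exactly three facets meet along it, and show the three resolution vectors admit no positive relation modulo $\mathrm{span}(\rho)$. Where you genuinely differ is in how this linear-algebra fact is settled. The paper assembles the $13\times 35$ matrix whose rows are the ten split metrics of the degenerate tree together with the three vectors $d_4(T_{E_i})$ and verifies \emph{with computer assistance} that it has full rank $13$, i.e.\ the $\rho_i$ are linearly independent modulo $\langle\sigma\rangle$ --- stronger than balancing failure requires. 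You replace this with a computer-free argument: since every $4$-subset contains four leaves, $\sum_{i=1}^{7}\chi_i=4\cdot\mathbf{1}$ puts $\mathbf{1}$ in $\mathrm{span}(\rho)$, so $u_i\equiv-\mathbf{e}_{S_i}$; the $\Sigma_3$-symmetry permuting the cherries makes the relation space a subrepresentation of the permutation representation $\mathbb{R}^3=\langle(1,1,1)\rangle\oplus\{a+b+c=0\}$, so a strictly positive relation exists if and only if $(1,1,1)$ is one; and the single membership question $\mathbf{e}_{S_1}+\mathbf{e}_{S_2}+\mathbf{e}_{S_3}\in\mathrm{span}(\rho)$ is refuted by hand. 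I checked that your sketched computation does go through: the homogeneous equations coming from $4$-subsets where the left side vanishes first give $c_1=c_2=c_3=c_4=c_6=c_7$, and then comparing a subset meeting all three cherries (e.g.\ $\{1,3,5,6\}$) with one missing exactly one cherry (e.g.\ $\{3,4,5,6\}$) forces $c_A=0$ and likewise $c_B=c_D=0$, after which $S_1$ versus $\{1,2,3,6\}$ yields the contradiction $c_4-c_6=1$. In fact pushing those same homogeneous equations one step further forces all coefficients to vanish, recovering the paper's stronger full-independence statement, so your route loses nothing. What your argument buys is a human-verifiable proof isolating exactly what balancing needs (no \emph{positive} relation); what the paper's buys is brevity via a single rank computation. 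Your two closing caveats --- that the embedded complex inherits the face structure of $\mathcal{T}_7$ via injectivity of $d_4$ (valid since $4\le\tfrac{7+1}{2}$), and that balancing of $\Gr^{\trop}(2,7)$ transfers no information because the three facets are images under three different linear maps --- are both correct and are implicitly relied upon in the paper as well.
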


However, this is not the end of the story.  The rational map $\Gr(2,n)\dashrightarrow \Gr(3,n)$ in \cite{PS04}, providing the motivation for their second tropical question, does not tropicalize to a map sending the $2$-dissimilarity vector of each phylogenetic tree to the corresponding $3$-dissimilarity vectors --- as Pachter and Speyer point out, the output is \emph{twice} the corresponding $3$-dissimilarity vector.  This generalizes to a rational map $\Gr(2,n) \dashrightarrow \Gr(r,n)$ whose tropicalization sends the $2$-dissimilarity vector of a phylogenetic tree to the $\binom{n}{r}$-tuple recording, for each size $r$ subset of the $n$ leaves, the sum of all path lengths connecting all pairs of leaves in this subset. It is just a coincidence that for $r=3$ these two different notions of subtree weights differ by a scalar.  We call these $\binom{n}{r}$-tuples defined using path lengths within subtrees \emph{weighted} $r$-dissimilarity vectors, and the map sending a phylogenetic tree to its vector of weighted $r$-dissimilarity vectors the \emph{weighted} $r$-dissimilarity map.  While for $r>3$ the original $r$-dissimilarity vectors do not have the tropical geometry interpretation Pachter and Speyer had hoped for, it turns out these weighted variants do:

\begin{theorem}[Theorem~\ref{thm:tropim}, Proposition \ref{prop:GelMac}]\label{thm:weighteddissimilarity}
For $2 \le r \le n-2$, the weighted $r$-dissimilarity map embeds the space of phylogenetic trees as a tropical subvariety in $\mathbb{R}^{\binom{n}{r}}$.  This tropical variety is the tropicalization of a subvariety of $\Gr(r,n)$ that is both (1) the image of a natural rational map $\Gr(2,n) \dashrightarrow \Gr(r,n)$, and (2) the Gelfand--MacPherson correspondence applied to the open subvariety of $(\PP^{r-1})^n$ parameterizing configurations of $n$ distinct points that lie on a rational normal curve in $\PP^{r-1}$. 
\end{theorem}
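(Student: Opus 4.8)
The plan is to realize the weighted $r$-dissimilarity map as the tropicalization of an explicit \emph{monomial} map between Grassmannians, and then to read all three descriptions off of this single construction.

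First I would write down the rational map $\phi\colon \Gr(2,n)\dashrightarrow\Gr(r,n)$. Represent a point of $\Gr(2,n)$ by a $2\times n$ matrix with columns $[s_i:t_i]$, i.e.\ $n$ points on $\PP^1$, and postcompose with the degree-$(r-1)$ Veronese embedding $\PP^1\hookrightarrow\PP^{r-1}$, $[s:t]\mapsto[s^{r-1}:s^{r-2}t:\cdots:t^{r-1}]$. The resulting $r\times n$ matrix defines a point of $\Gr(r,n)$ whose columns, as points of $\PP^{r-1}$, lie on the rational normal curve $\PP^1\hookrightarrow\PP^{r-1}$. By construction the closure $X=\overline{\im\phi}$ is the locus of configurations on a rational normal curve, which is description (1). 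For (2) I would spell out the dictionary underlying the Gelfand--MacPherson correspondence: a point of $\Gr(r,n)$ is an $r\times n$ matrix modulo $\GL_r$, its columns are $n$ points of $\PP^{r-1}$, and the torus $T=(\CC^*)^n/\CC^*$ acts by column scalings, which fixes those points; since $X$ is manifestly $T$-invariant, the correspondence identifies $X$ on the stable locus with the open subvariety of $(\PP^{r-1})^n$ parametrizing $n$ distinct points on a rational normal curve, modulo $\PGL_r$.

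The key computation is to express $\phi$ in Plücker coordinates. For a size-$r$ subset $S=\{i_1<\cdots<i_r\}$, the corresponding maximal minor of the Veronese matrix is a Vandermonde determinant in the ratios $z_i=s_i/t_i$; factoring out $t_i^{r-1}$ from each column and using $z_k-z_l=p_{kl}/(t_kt_l)$, the powers of $t_i$ cancel exactly and a degree count confirms the polynomial identity, up to sign,
\begin{equation*}
P_S \;=\; \prod_{\{k,l\}\subseteq S} p_{kl},
\end{equation*}
where $p_{kl}=s_kt_l-s_lt_k$ are the Plücker coordinates on $\Gr(2,n)$. Thus $\phi$ is a monomial map, encoded by the $\binom{n}{r}\times\binom{n}{2}$ inclusion matrix $W$ of $2$-subsets into $r$-subsets, with $W_{S,\{k,l\}}=1$ exactly when $\{k,l\}\subseteq S$. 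Tropicalizing, $\Trop(\phi)$ is the linear map $W\colon\RR^{\binom{n}{2}}\to\RR^{\binom{n}{r}}$, $(x_{kl})\mapsto\big(\sum_{\{k,l\}\subseteq S}x_{kl}\big)_S$. Because $\phi$ is monomial, tropicalization commutes with it, so $\Trop(X)=W\cdot\Trop(\Gr(2,n))$ as sets; by Speyer--Sturmfels, $\Gr^{\trop}(2,n)=\Trop(\Gr(2,n))$ is the space of $n$-leaf tree metrics, with $\trop(p_{kl})$ the pairwise leaf distance $d(k,l)$. Hence $W$ sends a tree metric to the vector whose $S$-entry is $\sum_{\{k,l\}\subseteq S}d(k,l)$, which is exactly the weighted $r$-dissimilarity vector. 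Therefore $\Trop(X)$ is the image of the weighted $r$-dissimilarity map, and being the tropicalization of the irreducible $X$ it is automatically a balanced tropical subvariety of $\Gr^{\trop}(r,n)$ --- in sharp contrast to the unweighted map of Theorem~\ref{thm:nottropicalvar}, whose ``sum of edge lengths'' does not arise from any monomial map.

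It remains to see that the map is an \emph{embedding} of tree space, i.e.\ that $W$ is injective. This is precisely where the hypothesis $r\le n-2$ enters: by Gottlieb's theorem on inclusion matrices, the $2$-into-$r$ inclusion matrix of $[n]$ has full column rank $\binom{n}{2}$ if and only if $2+r\le n$. (For $r=3$ every edge-split is $1+2$, so $W$ acts as multiplication by $2$ on each edge coordinate, recovering the Pachter--Speyer factor-of-two.) I expect the main obstacle to be the tropical functoriality step, namely verifying that taking the image genuinely commutes with tropicalization here --- that $\Trop(X)$ is not strictly larger than $W\cdot\Trop(\Gr(2,n))$. This commutation fails for general maps and holds here only because $\phi$ restricts to a homomorphism on the Plücker torus; making it rigorous requires the exact monomial identity for $P_S$ above (not merely a generic one), the density of $\im\phi$ in $X$ defining the closure, and a check that $W$ applied to the closed polyhedral complex $\Gr^{\trop}(2,n)$ already yields a closed set, so no boundary strata are lost.
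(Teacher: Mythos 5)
Your construction coincides with the paper's: the column-wise Veronese map, the Vandermonde factorization $P_S=\prod_{\{k,l\}\subseteq S}p_{kl}$ showing the map is monomial on the Pl\"ucker torus (Proposition~\ref{prop:restrict}), functoriality of tropicalization for monomial maps, and the identification of the resulting linear map with the weighted dissimilarity map (Propositions~\ref{prop:tropphi} and~\ref{prop:compo}). Your appeal to Gottlieb's theorem on inclusion matrices for the injectivity of $\Trop(\varphi_r)$ is a legitimate and slicker alternative to the paper's route, which instead constructs an explicit left inverse $M^+$ in Lemma~\ref{lem:phiinjective}; both give exactly the range $2\le r\le n-2$, and the paper's version has the small added benefit of an explicit formula recovering the $2$-dissimilarity vector from the weighted $r$-dissimilarity vector.

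There is, however, a genuine gap, and you have misdiagnosed where the difficulty sits. The functoriality step you flag as the ``main obstacle'' is not one: \cite[Corollary~3.2.13]{MS15} for monomial maps of tori gives $\Trop\bigl(\overline{\varphi_r(\Gr(2,n)^\circ)}\bigr)=\Trop(\varphi_r)\bigl(\Gr^{\trop}(2,n)\bigr)$ with the Zariski closure built into the statement, and no closedness check on the tropical side is required. The real obstacle is your sentence ``by construction the closure $X=\overline{\im\phi}$ is the locus of configurations on a rational normal curve'': that is precisely what must be proved. A priori the closure inside the torus could contain configurations lying only on degenerate flat limits of rational normal curves (the quasi-Veronese curves of \cite{CGMS18}) and on no actual rational normal curve; then the subvariety whose tropicalization you computed would be strictly larger than the image of the rational map, breaking description (1) (which concerns the image, not its closure) and description (2). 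The paper's Lemma~\ref{lem:closed} handles this with a valuative-criterion argument: lift a DVR point to a matrix over $R$ with nonzero maximal minors, observe that the special fiber is a non-degenerate configuration on a quasi-Veronese curve by \cite[Proposition~2.7]{CGMS18}, and in the reducible case use the spans of the components to force $n\le d_1+d_2+2=r+1$, whence Castelnuovo's lemma produces a genuine rational normal curve through the points, which are in general linear position by the nonvanishing minors. This shows $\varphi_r(\Gr(2,n)^\circ)$ is closed in $(\Bbbk^\times)^{\binom{n}{r}}$, and combined with the classical Vandermonde fact it yields the precise Gelfand--MacPherson identification of Proposition~\ref{prop:GelMac} --- including the distinctness of the $n$ points, which you assert but do not verify (a repeated point makes two columns proportional and kills a maximal minor, and conversely distinct points on a rational normal curve have all minors nonzero). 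Without this degeneration argument your proof establishes the first sentence of the theorem (the image of $d_r^{wt}$ is a balanced tropical subvariety) but not the two geometric descriptions of the underlying algebraic subvariety.
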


The equations for the Zariski closure of the locus in $(\PP^{r-1})^n$ mentioned in the preceding theorem were studied in \cite{CGMS18}.  While they are not known in full generality, we prove here that a particularly simple subset of the defining equations, after applying the Gelfand--MacPherson correspondence, yields a tropical basis for an ideal whose set-theoretic vanishing locus is the subvariety of $\Gr(r,n)$ alluded to in the preceding theorem.  As a consequence of this tropical basis result, we obtain the following characterization of weighted dissimilarity vectors, generalizing the classic tree-metric theorem for $2$-dissimilarity vectors:

\begin{theorem}[Corollary \ref{cor:tropicalbasis}]\label{thm:tropicalbasisintro}
Fix  $2 \le r \le n-2$. A vector $w=(w_I)_{I\in \binom{[n]}{r}} \in \mathbb{R}^{\binom{n}{r}}$ is a weighted $r$-dissimilarity vector if and only if the following two conditions hold:
\begin{enumerate}

\item for each $4$-tuple $\{i,j,k,l\} \subseteq [n]$ there exists an $A\subseteq [n]\setminus \{i,j,k,l\}$ of size $r-2$ such that two of the following expressions equal each other and are greater than or equal to the third:
\[w_{ijA} + w_{klA},~w_{ikA}+w_{jlA},~w_{ilA} + w_{jkA};\]

\item for each $I \in \binom{[n]}{6}$, $J\in\binom{[n]\setminus I}{r-3}$, and for each cube $C$ on $I$ (see \S\ref{ssec:equationsVdn} for the notation) with corresponding bipartition $B,W$ we have

\[
\sum_{K \in B}w_{J\sqcup K}=\sum_{K \in W}w_{J\sqcup K}.
\]

\end{enumerate}
\end{theorem}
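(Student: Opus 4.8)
The plan is to read the corollary off the two structural results already established. Theorem~\ref{thm:weighteddissimilarity} identifies the space of weighted $r$-dissimilarity vectors with the tropicalization $\Trop(X)$ of a subvariety $X\subseteq\Gr(r,n)$, and the preceding tropical basis result produces an ideal $I$ with set-theoretic zero locus $X$, together with a tropical basis consisting of two families of generators: the three-term Plücker relations
\[
p_{ijA}p_{klA}-p_{ikA}p_{jlA}+p_{ilA}p_{jkA},
\]
indexed by $4$-tuples $\{i,j,k,l\}$ and $(r-2)$-subsets $A$ disjoint from them, and the binomial cube relations $\prod_{K\in B}p_{J\sqcup K}-\prod_{K\in W}p_{J\sqcup K}$. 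Because the support of a tropical variety depends only on the underlying set and $V(I)=X$ set-theoretically, the tropical basis property gives
\[
\Trop(X)=\Trop(V(I))=\bigcap_{f}\Trop(V(f)),
\]
the intersection running over the basis elements $f$. Thus $w$ is a weighted $r$-dissimilarity vector if and only if it lies in the tropical hypersurface of every basis element, and it remains only to compute these hypersurfaces and match them with conditions (1) and (2).

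The tropical hypersurface of the binomial $\prod_{K\in B}p_{J\sqcup K}-\prod_{K\in W}p_{J\sqcup K}$ is exactly the hyperplane $\sum_{K\in B}w_{J\sqcup K}=\sum_{K\in W}w_{J\sqcup K}$, so the cube relations contribute precisely condition (2). In the max convention the tropical hypersurface of the three-term Plücker relation is the locus where the maximum of $w_{ijA}+w_{klA}$, $w_{ikA}+w_{jlA}$, and $w_{ilA}+w_{jkA}$ is attained at least twice, i.e.\ where two of these are equal and at least as large as the third; intersecting over all $A$ yields the ``for all $A$'' strengthening of condition (1). This already settles the forward implication, which one can also verify by hand: writing $w_I=\sum_{\{a,b\}\subseteq I}d_{ab}$ for the underlying tree metric $d$, the three sums above differ from $d_{ij}+d_{kl}$, $d_{ik}+d_{jl}$, $d_{il}+d_{jk}$ by the same constant depending only on $A$, so condition (1) for every $A$ reduces to the classical four-point condition on $d$, while condition (2) follows from the combinatorial balance of the cube (each face lies on two black and two white vertices, and each edge joins a black vertex to a white one).

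The real content is the converse, and the only subtlety is the gap between the ``there exists $A$'' in (1) and the ``for all $A$'' produced above. I would close it with the linear-algebra lemma that the cube relations (2) cut out exactly the image of the linearization
\[
L\colon\mathbb{R}^{\binom{[n]}{2}}\to\mathbb{R}^{\binom{[n]}{r}},\qquad L(d)_I=\sum_{\{a,b\}\subseteq I}d_{ab}.
\]
Granting this, any $w$ satisfying (2) equals $L(d)$ for some $d$, and then the pairwise differences $(w_{ijA}+w_{klA})-(w_{ikA}+w_{jlA})$ collapse to $(d_{ij}+d_{kl})-(d_{ik}+d_{jl})$, which is independent of $A$; hence the tropical three-term condition for one $A$ forces it for all $A$, and ``there exists $A$'' upgrades to ``for all $A$.'' Therefore a $w$ satisfying (1) and (2) lies in the tropical hypersurface of every basis element, so $w\in\bigcap_f\Trop(V(f))=\Trop(X)$ and is a weighted $r$-dissimilarity vector.

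The main obstacle is this lemma, specifically the inclusion that (2) forces $w\in\im(L)$. The reverse containment $\im(L)\subseteq\{w:(2)\}$ is the computation in the previous paragraph, and $L$ is injective for $2\le r\le n-2$ (a vanishing $w=L(d)$ forces $d_{ab}=x_a+x_b$ by mixed second differences, and then the values $w_I=(r-1)\sum_{a\in I}x_a$ force $x\equiv0$), so $\dim\im(L)=\binom{n}{2}$. What remains is to show that the cube relations have rank $\binom{n}{r}-\binom{n}{2}$, i.e.\ that they span all linear relations among weighted dissimilarity vectors; I would deduce this from the description of the linear syzygies of $X$ in \S\ref{ssec:equationsVdn} and \cite{CGMS18}.
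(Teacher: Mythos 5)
Your architecture coincides with the paper's: the corollary is deduced from the tropical basis theorem (Theorem~\ref{thm:tropicalbasis}), whose proof runs through exactly the chain you describe --- $V(\mathcal{J}_{r,n})=\varphi_r(\Gr(2,n)^\circ)$ set-theoretically (Proposition~\ref{prop:ideallocus}), $\Trop$ of that image equals $d_r^{wt}(\mathcal{T}_n)$ (Theorem~\ref{thm:tropim} via Proposition~\ref{prop:compo}), the cube binomials tropicalize to the hyperplanes of condition (2), and the three-term Pl\"ucker relations to condition (1). Your resolution of the ``exists $A$'' versus ``for all $A$'' discrepancy is also the paper's: once (2) forces $w=\Trop(\varphi_r)(d)$, the three pairwise differences become independent of $A$; in the paper this appears as the pull-back identity $\varphi_{r}^*(x_{ijA}x_{klA} - x_{ikA}x_{jlA} + x_{ilA}x_{jkA})=(x_{ij}x_{kl} - x_{ik}x_{jl} + x_{il}x_{jk})\cdot(\text{monomial})$ in the proof of Theorem~\ref{thm:tropicalbasis}, together with Remark~\ref{rem:reducedtropicalbasis}. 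Your forward direction and the verification that $\im(L)$ satisfies (2) match the paper's observations that each element of $I$ lies in exactly two black and two white triples of the cube and each pair in exactly one of each.

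The genuine gap is precisely where you flag it: the lemma that condition (2) cuts out \emph{exactly} $\im\Trop(\varphi_r)$, equivalently that the linear forms \eqref{eq:linfcns} have rank $\binom{n}{r}-\binom{n}{2}$. You propose to deduce this ``from the description of the linear syzygies of $X$ in \S\ref{ssec:equationsVdn} and \cite{CGMS18},'' but no such description exists: \cite{CGMS18} supplies the polynomials $\psi_{C,I,J}$ and the fact that they vanish on $V_{r-1,n}$, and says nothing about the linear span of their tropicalizations. This rank statement is the technical heart of the paper, Proposition~\ref{prop:troplin}, and it is proved by an explicit upper-triangularity construction: for each $I\in\binom{[n]}{r}$ whose three smallest elements $a<b<c$ admit $d,e,f\in[n]\setminus I$ with $a<d$, $b<e$, $c<f$, one chooses a specific cube on $\{a,b,c,d,e,f\}$ so that the resulting row of the coefficient matrix has its lexicographically leftmost nonzero entry in column $I$; an induction on $n$, using the identity $\sum_{i=1}^{m-s+1}i\binom{m-1-i}{s-2}=\binom{m}{s}$, then shows there are exactly $\binom{n}{r}-\binom{n}{2}$ such columns. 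Note that the freedom in the choice of the cube $C$ is essential to this construction (it is why the paper retains all $15$ presentations $\phi_C$ rather than a single $\phi$). Your injectivity sketch for $L$ is fine but is the easy half --- the paper instead exhibits an explicit left inverse $M^+$ in Lemma~\ref{lem:phiinjective} --- and without the rank bound your converse direction remains unproven.
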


The case $r=2$ is a main result of \cite{SS04} and our proof relies on their result; in both this case and the case $r=n-2$ condition (2) here is vacuous because ${[n] \setminus I \choose r-3} = \emptyset$.  In general, this characterization does not provide a minimal, non-redundant set of conditions, and indeed our proof suggests an algorithmic approach for reducing the number of conditions of type (2) that need to be checked.

\begin{remark}
In \cite{SS04} it is shown that the quadratic Pl\"ucker relations do not form a tropical basis for $\Gr(r,n)$ when $r \ge 3$ and $n \ge 7$, and in general the tropical Grassmannian depends on the characteristic of the base field.  It is interesting to contrast with the present situation where the tropical subvariety of $\Gr^{\trop}(r,n)$ parameterizing weighted $r$-dissimilarity vectors, and the tropical basis we construct for it, is independent of the base field.
\end{remark}

\subsection*{Acknowledgements}
NG was supported in part by NSF DMS-1802263 and thanks the members of the Spring 2016 UGA VIGRE graduate student tropical research group: Natalie Hobson, Andrew Maurer, Xian Wu, Matt Zawodniak, and Nate Zbacnik. We also would like to thank the anonymous referee for the valuable comments and suggestions.

\section{Background and preliminaries}\label{sec:background}

First some conventions.  We work over an algebraically closed field $\Bbbk$ of arbitrary characteristic, equipped with the trivial valuation.  For a subvariety $X \subseteq \PP^{N-1}$ of projective space, we denote by $X^\circ := X^{\mathrm{aff}} \cap (\Bbbk^\times)^{N}$ the restriction of the affine cone over $X$ to the dense open torus in $\mathbb{A}^{N}$.  Tropicalization sends subvarieties of the torus $(\Bbbk^\times)^{N}$ to subsets of Euclidean space $\mathbb{R}^{N}$.

\subsection{Phylogenetic trees}\label{ssec:phylogenetictrees}

For us, an \emph{$n$-leaf phylogenetic tree} is a connected graph, without cycles or vertices of degree 2, with $n$ leaves labelled by the integers $[n] := \{1,\ldots,n\}$, that is equipped with an $\mathbb{R}$-valued length on each edge such that all the internal edges have non-negative length.  The set of $n$-leaf phylogenetic trees with a fixed combinatorial tree as the underlying graph forms a half-space $\mathbb{R}_{\ge 0}^{\#edges - n}\times \mathbb{R}^n$, and by identifying trees having edges of length zero with the trees obtained by deleting such edges these half-spaces are naturally glued together and form an abstract polyhedral complex, that we shall denote by $\mathcal{T}_n$, known as the \emph{space of phylogenetic trees} \cite{BHV01}.  An influential result of Speyer--Sturmfels is that the tropical Grassmannian 
\[
	\Gr^{\trop}(2,n) := \Trop(\Gr(2,n)^\circ) \subseteq \mathbb{R}^{\binom{n}{2}}
\] 
coincides with the space of phylogenetic trees $\mathcal{T}_n$ \cite[Theorem 3.4]{SS04}. 

\begin{remark}\label{rem:linearityspace}
A phylogenetic tree is sometimes defined to have edge lengths only on its internal edges.  The space of such phylogenetic trees is the quotient of $\Gr^{\trop}(2,n)$ by a linear subspace of dimension $n$, and it coincides with the moduli space of tropical $n$-pointed stable rational curves $\mathrm{M}^{\trop}_{0,n}$ somewhat analogous to Kapranov's construction \cite{Kap93} of $\overline{\mathrm{M}}_{0,n}$ as a (Chow) quotient of the Grassmannian $\Gr(2,n)$ by the maximal torus $(\Bbbk^{\times})^n$ (indeed the linear subspace $\mathbb{R}^n$ acting on $\Gr^{\trop}(2,n)$ is the tropicalization of Kapranov's torus action).  Throughout this paper we include the non-internal edge lengths and hence work in $\mathbb{R}^{\binom{n}{2}}$ without taking this linear subspace quotient.
\end{remark}

\subsection{Dissimilarity vectors and maps}

The map
\[d_2 : \mathcal{T}_n \rightarrow \mathbb{R}^{\binom{n}{2}},\]
sending each phylogenetic tree $T$ to the vector whose $(i < j)$-entry is the sum of edge lengths along the unique path in $T$ connecting leaf $i$ to leaf $j$ is known as the \emph{dissimilarity map}, and the output $d_2(T)$ is a \emph{dissimilarity vector}.  This map is injective \cite{Bun71}, with image equal to $\Gr^{\trop}(2,n)$; it identifies phylogenetic trees with dissimilarity vectors, or equivalently, points of the tropical Grassmannian \cite{SS04}.  

The \emph{higher} dissimilarity map 
\[d_r : \mathcal{T}_n \rightarrow \mathbb{R}^{\binom{n}{r}},\]
introduced in \cite{PS04} for $r \ge 3$, sends $T$ to the \emph{higher} dissimilarity vector whose $I$-entry, for $I\in\binom{[n]}{r}$, is the sum of edge lengths among all edges in the subtree spanned by the $r$ leaves indexed by $I$; it is injective for $2 \le r \le \frac{n+1}{2}$ \cite[Theorem in \S2]{PS04}.  Since a tree spanned by two leaves is a path, the $r=2$ case of this map coincides with the dissimilarity map in the preceding paragraph.  

Pachter and Speyer asked two questions about these higher dissimilarity maps \cite[Problems 3 and 4]{PS04}: (1) is the image of $d_r$ contained in $\Gr^{\trop}(r,n)$, and if so then (2) is there a rational map $\Gr(2,n) \dashrightarrow \Gr(r,n)$ whose image, viewed as a subvariety of $(\Bbbk^\times)^{\binom{n}{r}}$ by taking the affine cone over the Pl\"ucker embedding and then intersecting with the big torus in $\AA^{\binom{n}{r}}$, tropicalizes to the space of higher dissimilarity vectors $d_r(\mathcal{T}_n)$.  Various authors, cited above in the introduction, resolved the first of these questions in the affirmative.  For the second question there has been progress in characterizing the image of $d_r$ \cite{Rub12}, even in terms of a piecewise linear map that appears related to tropical geometry \cite{BC09}, but the only case that had been fully resolved is $r=3$, where in \cite[\S3]{PS04} it is observed that the rational map $\Gr(2,n) \dashrightarrow \Gr(3,n)$ induced by applying the second Veronese map to the columns of a $2\times n$ matrix achieves the desired goal.  This Pachter--Speyer map can be generalized as follows:

\begin{definition}\label{def:higherVeronese}
The matrix morphism $\AA^{2n} \to \AA^{rn}$,
\begin{equation}\label{eqn:psi}
	\left(\begin{array}{cccc}x_{1}&x_{2}& \cdots &x_{n}\\ y_{1} & y_{2}&\cdots & y_{n}\end{array}\right) 
	\mapsto 
	\left(\begin{array}{cccc}x_{1}^{r-1}&x_{2}^{r-1}& \cdots &x_{n}^{r-1}\\ x_{1}^{r-2}y_{1}&x_{2}^{r-2}y_{2} & \cdots &x_{n}^{r-2}y_{n}\\ x_{1}^{r-3}y_{1}^{2} & x_{2}^{r-3}y_{2}^{2} & \cdots & x_{n}^{r-3}y_{n}^{2}\\ &&\ddots \\
	y_{1}^{r-1} & y_{2}^{r-1}&\cdots &y_{n}^{r-1}\end{array}\right),
\end{equation}
given by applying the $(r-1)$-Veronese map to each column, descends to a rational map of Grassmannians $\Gr(2,n)\dashrightarrow \Gr(r,n)$ that we shall call the \emph{column-wise $(r-1)$-Veronese Grassmannian map}, or simply \emph{Veronese Grassmannian map} for short.
\end{definition}

The fact that this matrix map descends to the Grassmannians follows from the elementary observation that the image of each $\GL_2$-orbit is contained in a $\GL_r$-orbit.  Note, however, that the image of a full-rank matrix need not be a full-rank matrix, so at the level of Grassmannians this really is just a rational map and not a regular morphism; for instance, the full-rank matrix
\begin{displaymath}
\left( \begin{array}{ccccc}
1&0&0&\cdots&0\\
0&1&0&\cdots&0
\end{array} \right)
\end{displaymath}
is sent to the following non-full-rank matrix:
\begin{displaymath}
\left( \begin{array}{ccccc}
1&0&0&\cdots&0\\
0&0&0&\cdots&0\\
&&\vdots\\
0&1&0&\cdots&0
\end{array} \right).
\end{displaymath}
This column-wise Veronese Grassmannian map will play a central role in our paper.

\section{Resolution of Pachter--Speyer's second question}\label{sec:nonbal}
Recall that for each tree $G$ underlying an $n$-leaf phylogenetic tree (meaning $G$ has leaves labelled by $1,\ldots,n$ but the edges do not carry weights) there is a polyhedral cone in the space of phylogenetic trees $\mathcal{T}_n$, let us call it $\mathcal{T}_n^G$, parameterizing phylogenetic trees on $G$.  The restriction of the dissimilarity map $d_r$ to each such polyhedral cone is linear and so the image $d_r(\mathcal{T}_n^G)$ is a polyhedral cone in $\mathbb{R}^{\binom{n}{r}}$. By varying $G$, the polyhedral cones $d_r(\mathcal{T}_n^G)$ provide a polyhedral decomposition of the space of $r$-dissimilarity vectors.

For each edge $E$ of $G$, let $T_E$ be the phylogenetic tree on $G$ where $E$ has length $1$ and all the other edges have length $0$. In phylogenetics, such trees $T_E$ are called \emph{split metrics}. Then the polyhedral cone $d_r(\mathcal{T}_n^G)$ consists of all $\mathbb{R}$-linear combinations of the vectors $d_r(T_E)$ such that the coefficient on $d_r(T_E)$ is non-negative whenever $E$ is an internal edge.

To show that the space of $r$-dissimilarity vectors $d_r(\mathcal{T}_n)$ for $r>3$ is not the tropicalization of the image of a map $\Gr(2,n) \dashrightarrow \Gr(r,n)$, we show a stronger result: $d_r(\mathcal{T}_n)$ is not even a tropical variety in general. This is because, as a polyhedral complex, $d_r(\mathcal{T}_n)$ is not balanced for $r \ge 4$ (for the definition of balanced see \cite[Definition 3.3.1]{MS15}, and tropical varieties are balanced by \cite[Theorem 3.3.5]{MS15}). We check this explicitly in the first non-trivial case:

\begin{theorem}\label{prop:nonbal}
The $11$-dimensional polyhedral complex $d_4(\mathcal{T}_7) \subseteq \mathbb{R}^{\binom{7}{4}} = \mathbb{R}^{35}$ is not balanced for any choice of facet weights and hence it is not a tropical variety.
\end{theorem}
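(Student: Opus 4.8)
The plan is to reduce the balancing condition to a finite, explicit linear-algebra check at a single well-chosen ridge of the complex $d_4(\mathcal{T}_7)$, and then to show that at that ridge the three incident facet directions are linearly independent, so that no nonzero (in particular no positive) assignment of facet weights can balance it.

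First I would record the combinatorial structure. Because $d_4$ is injective for $n=7$ (here $r=4=\frac{n+1}{2}$) and linear on each cone $\mathcal{T}_7^G$, the images of the maximal cones --- the trivalent trees $G$ --- are the facets of $d_4(\mathcal{T}_7)$, each of dimension $11$, so the complex is pure. A ridge is the image of a cone $\mathcal{T}_7^{G'}$ where $G'$ has a single $4$-valent internal vertex (obtained by contracting one internal edge); exactly three facets contain it, namely the images of the three trivalent resolutions $G_1,G_2,G_3$ of that vertex. Writing $L_\tau$ for the linear span of the ridge $\tau=d_4(\mathcal{T}_7^{G'})$ and $E_i$ for the internal edge created in $G_i$, the direction of the $i$-th facet away from $\tau$ is $u_i:=d_4(T_{E_i})\bmod L_\tau$, where $T_{E_i}$ is the corresponding split metric. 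Balancing at $\tau$ asks for positive reals $w_1,w_2,w_3$ with $w_1u_1+w_2u_2+w_3u_3=0$ in $\mathbb{R}^{35}/L_\tau$, and it suffices to exhibit one ridge where this is impossible.

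Next I would compute these directions explicitly. For a split metric $T_E$ with split $A\mid A^c$ one has $d_4(T_E)_I=1$ exactly when $I$ meets both $A$ and $A^c$, i.e. $d_4(T_E)_I = 1-[I\subseteq A]-[I\subseteq A^c]$. Since $\sum_{\ell=1}^{7}d_4(T_\ell)=\sum_I |I|\,e_I = 4\cdot\mathbf{1}$ and the leaf split metrics all lie in $L_\tau$, the all-ones vector $\mathbf{1}$ lies in $L_\tau$; hence modulo $L_\tau$ each $u_i$ simplifies to $-\sum_{I\subseteq S_i}e_I$, where $S_i$ is the side of the split of $E_i$ of size $\ge 4$. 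I would take $G'$ to be a $4$-valent tree on $1,\ldots,7$ whose $4$-valent vertex has the four branches $\{1,2\},\{3\},\{4\},\{5,6,7\}$, so that the three resolutions have splits $\{3,4\}\mid\{1,2,5,6,7\}$, $\{1,2,3\}\mid\{4,5,6,7\}$, and $\{1,2,4\}\mid\{3,5,6,7\}$. This yields
\[
u_1 \equiv -\!\!\sum_{I\subseteq\{1,2,5,6,7\}}\!\! e_I,\qquad u_2\equiv -e_{4567},\qquad u_3\equiv -e_{3567}\pmod{L_\tau},
\]
so $u_1$ is supported on five coordinates while $u_2,u_3$ are single-coordinate vectors on disjoint indices --- the asymmetry that the $r=2$ (tree-metric) balancing does not see.

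Finally, the crux is to prove that $u_1,u_2,u_3$ are linearly independent modulo $L_\tau$. I would do this by producing, for each $i$, a functional $\varphi^{(i)}$ in the annihilator $L_\tau^\perp$ --- equivalently a vector of coefficients $(\varphi_I)$ satisfying the ten linear equations $\sum_{I\ni\ell}\varphi_I=0$ for each leaf $\ell$ together with the three equations coming from the internal edges of $G'$ --- that isolates $u_i$, meaning $\varphi^{(i)}(u_j)=\delta_{ij}$. Because $\varphi(u_1),\varphi(u_2),\varphi(u_3)$ depend on the pairwise disjoint coordinate blocks $\{1256,1257,1267,1567,2567\}$, $\{4567\}$, and $\{3567\}$, these isolating functionals exist, certifying independence; then $w_1u_1+w_2u_2+w_3u_3=0$ forces $w_1=w_2=w_3=0$, which is incompatible with the positivity of facet weights in a tropical variety. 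The main obstacle is the explicit verification that the required $\varphi^{(i)}\in L_\tau^\perp$ exist --- a finite feasibility check in $\mathbb{R}^{35}$ amounting to confirming that the six distinguished coordinates are neither forced to vanish nor forced into a single relation by the ten defining equations of $L_\tau^\perp$; once this is done (by hand or machine) the non-balancing, and hence the failure to be a tropical variety, follows at once.
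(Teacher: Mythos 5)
Your proposal is correct and takes essentially the same approach as the paper: the paper likewise fixes a single ridge coming from a tree with one $4$-valent vertex (with branches $\{1,2\},\{3,4\},\{5,6\},\{7\}$ rather than your $\{1,2\},\{3\},\{4\},\{5,6,7\}$) and verifies by a computer rank computation on the $13\times 35$ matrix of split-metric images that the three vectors $d_4(T_{E_i})$ are linearly independent modulo the span $\langle\sigma\rangle$ of the ridge, which is precisely your deferred finite feasibility check. That check does succeed at your chosen ridge as well (one can solve the linear system by hand: the generic coordinates force all leaf coefficients to vanish, after which comparing supports kills $w_1,w_2,w_3$) --- just note that disjointness of the supports of $u_1,u_2,u_3$ by itself does not produce the isolating functionals $\varphi^{(i)}\in L_\tau^\perp$, so the explicit verification you flag as the main obstacle is genuinely needed, exactly as in the paper.
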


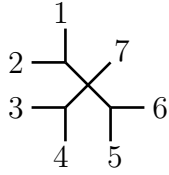
\begin{figure}
\begin{tikzpicture}[scale=0.3]

	\draw[line width=1pt] (0,0) -- (1,1);
	\draw[line width=1pt] (0,0) -- (-1,1);
	\draw[line width=1pt] (0,0) -- (1,-1);
	\draw[line width=1pt] (0,0) -- (-1,-1);
	\draw[line width=1pt] (-1,1) -- (-2.5,1);
	\draw[line width=1pt] (-1,1) -- (-1,2.5);
	\draw[line width=1pt] (1,-1) -- (2.5,-1);
	\draw[line width=1pt] (1,-1) -- (1,-2.5);
	\draw[line width=1pt] (-1,-1) -- (-2.5,-1);
	\draw[line width=1pt] (-1,-1) -- (-1,-2.5);

	\node at (-1.2,3.2) {$1$};
	\node at (-3.2,1) {$2$};
	\node at (-3.2,-1) {$3$};
	\node at (-1.2,-3.2) {$4$};
	\node at (1.2,-3.2) {$5$};
	\node at (3.2,-1) {$6$};
	\node at (1.5,1.5) {$7$};

\end{tikzpicture}
\caption{The graph $G$ defining a $10$-dimensional cone $\sigma$ in the space of $4$-dissimilarity vectors.}
\label{fig:treeexample}
\end{figure}

\begin{figure}

\begin{tikzpicture}[scale=0.25]

	\draw[line width=1pt] (0,-1) -- (0,1);
	\draw[line width=1pt] (0,1) -- (1,2);
	\draw[line width=1pt] (0,1) -- (-1,2);
	\draw[line width=1pt] (0,-1) -- (1,-2);
	\draw[line width=1pt] (0,-1) -- (-1,-2);
	\draw[line width=1pt] (-1,2) -- (-1,3.5);
	\draw[line width=1pt] (-1,2) -- (-2.5,2);
	\draw[line width=1pt] (1,-2) -- (1,-3.5);
	\draw[line width=1pt] (1,-2) -- (2.5,-2);
	\draw[line width=1pt] (-1,-2) -- (-1,-3.5);
	\draw[line width=1pt] (-1,-2) -- (-2.5,-2);

	\node at (1.2,-4.2) {$1$};
	\node at (3.2,-2) {$2$};
	\node at (-3.2,-2) {$3$};
	\node at (-1.2,-4.2) {$4$};
	\node at (-1.2,4.2) {$5$};
	\node at (-3.2,2) {$6$};
	\node at (1.5,2.5) {$7$};
	
	\node at (1.2,0.1) {$E_1$};

\end{tikzpicture}
\hspace{.2in}
\begin{tikzpicture}[scale=0.25]

	\draw[line width=1pt] (0,-1) -- (0,1);
	\draw[line width=1pt] (0,1) -- (1,2);
	\draw[line width=1pt] (0,1) -- (-1,2);
	\draw[line width=1pt] (0,-1) -- (1,-2);
	\draw[line width=1pt] (0,-1) -- (-1,-2);
	\draw[line width=1pt] (-1,2) -- (-1,3.5);
	\draw[line width=1pt] (-1,2) -- (-2.5,2);
	\draw[line width=1pt] (1,-2) -- (1,-3.5);
	\draw[line width=1pt] (1,-2) -- (2.5,-2);
	\draw[line width=1pt] (-1,-2) -- (-1,-3.5);
	\draw[line width=1pt] (-1,-2) -- (-2.5,-2);

	\node at (-3.2,-2) {$1$};
	\node at (-1.2,-4.2) {$2$};
	\node at (-1.2,4.2) {$3$};
	\node at (-3.2,2) {$4$};
	\node at (1.2,-4.2) {$5$};
	\node at (3.2,-2) {$6$};
	\node at (1.5,2.5) {$7$};
	
	\node at (1.2,0.1) {$E_2$};

\end{tikzpicture}
\hspace{.2in}
\begin{tikzpicture}[scale=0.25]

	\draw[line width=1pt] (0,-1) -- (0,1);
	\draw[line width=1pt] (0,1) -- (1,2);
	\draw[line width=1pt] (0,1) -- (-1,2);
	\draw[line width=1pt] (0,-1) -- (1,-2);
	\draw[line width=1pt] (0,-1) -- (-1,-2);
	\draw[line width=1pt] (-1,2) -- (-1,3.5);
	\draw[line width=1pt] (-1,2) -- (-2.5,2);
	\draw[line width=1pt] (1,-2) -- (1,-3.5);
	\draw[line width=1pt] (1,-2) -- (2.5,-2);
	\draw[line width=1pt] (-1,-2) -- (-1,-3.5);
	\draw[line width=1pt] (-1,-2) -- (-2.5,-2);

	\node at (-1.2,4.2) {$1$};
	\node at (-3.2,2) {$2$};
	\node at (-3.2,-2) {$3$};
	\node at (-1.2,-4.2) {$4$};
	\node at (1.2,-4.2) {$5$};
	\node at (3.2,-2) {$6$};
	\node at (1.5,2.5) {$7$};
	
	\node at (1.2,0.1) {$E_3$};

\end{tikzpicture}
\caption{The three graphs whose corresponding $11$-dimensional cones $\tau_1,\tau_2,\tau_3$ meet along the common face $\sigma$.}
\label{fig:degenerations}
\end{figure}
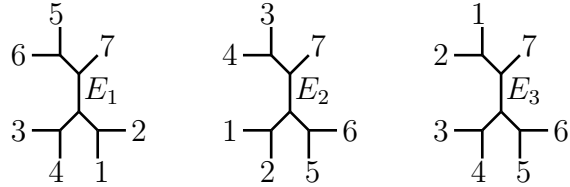

\begin{proof}
Consider the graph $G$ in Figure~\ref{fig:treeexample} with a unique vertex of degree $4$. This corresponds to a codimension-one cone $\sigma := d_4(\mathcal{T}_7^G)$ that is the common face of three maximal-dimensional cones, call them $\tau_1,\tau_2,\tau_3$. Each $\tau_i$ corresponds to the graph obtained by inserting an edge $E_i$ separating the $4$ incident edges in $G$ into two pairs of coincident edges (see Figure~\ref{fig:degenerations}).  Since the edge $E_i$ is internal, the cone $\tau_i$ is the $\mathbb{R}_{\ge 0}$-span of $\sigma$ and the vector $\rho_i := d_4(T_{E_i})$.  For $d_4(\mathcal{T}_7)$ to be balanced along $\sigma$ it is necessary that $\rho_1,\rho_2,\rho_3$ are linearly dependent modulo the subspace $\langle \sigma \rangle$ spanned by $\sigma$.  

Consider the $13\times 35$ matrix whose columns are indexed by the size 4 subsets of $[7]$ (ordered lexicographically: $\{1,2,3,4\}<\{1,2,3,5\}<\cdots<\{4,5,6,7\}$), whose first 10 rows are the images under $d_4$ of the split metrics defined by the edges in $G$, and whose last three rows are the images of the split metrics defined by the edges $E_i$:
{\footnotesize
\[
	\arraycolsep=1pt
	\left(\begin{array}{ccccccccccccccccccccccccccccccccccc}
1&1&1&1&1&1&1&1&1&1&1&1&1&1&1&1&1&1&1&1&0&0&0&0&0&0&0&0&0&0&0&0&0&0&0\\
1&1&1&1&1&1&1&1&1&1&0&0&0&0&0&0&0&0&0&0&1&1&1&1&1&1&1&1&1&1&0&0&0&0&0\\
1&1&1&1&0&0&0&0&0&0&1&1&1&1&1&1&0&0&0&0&1&1&1&1&1&1&0&0&0&0&1&1&1&1&0\\
1&0&0&0&1&1&1&0&0&0&1&1&1&0&0&0&1&1&1&0&1&1&1&0&0&0&1&1&1&0&1&1&1&0&1\\
0&1&0&0&1&0&0&1&1&0&1&0&0&1&1&0&1&1&0&1&1&0&0&1&1&0&1&1&0&1&1&1&0&1&1\\
0&0&1&0&0&1&0&1&0&1&0&1&0&1&0&1&1&0&1&1&0&1&0&1&0&1&1&0&1&1&1&0&1&1&1\\
0&0&0&1&0&0&1&0&1&1&0&0&1&0&1&1&0&1&1&1&0&0&1&0&1&1&0&1&1&1&0&1&1&1&1\\
0&1&1&0&1&1&0&1&1&1&1&1&0&1&1&1&1&1&1&1&1&1&0&1&1&1&1&1&1&1&1&1&1&1&1\\
1&1&1&1&1&1&1&0&0&0&1&1&1&1&1&1&1&1&1&0&1&1&1&1&1&1&1&1&1&0&1&1&1&1&1\\
1&1&1&1&1&1&1&1&1&1&1&1&1&1&1&1&1&1&1&1&1&1&1&1&1&1&1&1&1&1&0&0&0&0&0\\
0&1&1&1&1&1&1&1&1&1&1&1&1&1&1&1&1&1&1&1&1&1&1&1&1&1&1&1&1&1&1&1&1&1&1\\
1&1&1&1&1&1&1&0&1&1&1&1&1&1&1&1&1&1&1&1&1&1&1&1&1&1&1&1&1&1&1&1&1&1&1\\
1&1&1&1&1&1&1&1&1&1&1&1&1&1&1&1&1&1&1&1&1&1&1&1&1&1&1&1&1&1&0&1&1&1&1
	\end{array}\right)
\]
}
The last three rows are the vectors $\rho_i$ (the only entry of $\rho_i$ not equal to 1 is in the column indexed by the unique 4-tuple of vertices whose induced subgraph does not contain $E_i$).  With computer assistance we check that this matrix has full rank.  This implies that the last three rows are linearly independent modulo the subspace  $\langle \sigma \rangle$ spanned by the first $10$ rows, which shows that tropical balancing is not possible at $\sigma$.  
\end{proof}

\section{Weighted dissimilarity vectors}\label{sec:weighteddissimvectors}

In this section we tropicalize the Veronese Grassmannian map from Definition \ref{def:higherVeronese} and show that the image of the tropicalized map is the space of phylogenetic trees, embedded by the weighted dissimilarity vectors that we introduce in this paper.  One of the main steps is to recognize the Veronese Grassmannian map as the restriction of a monomial map of tori; this crucially avails us of functoriality of tropicalization.

\subsection{Coordinatizing the Veronese Grassmannian map}

Recall that the Veronese Grassmannian map $\Gr(2,n) \dashrightarrow\Gr(r,n)$ in Definition~\ref{def:higherVeronese} is expressed in terms of a matrix map of affine spaces $\mathbb{A}^{2n} \rightarrow \mathbb{A}^{rn}$.  In order to tropicalize it we need to coordinatize the induced map on Grassmannians in their Pl\"ucker embeddings.  Since these Grassmannians are obtained as $\GL$-quotients, this means expressing the matrix map in terms of homogeneous collections of $\SL$-invariants, i.e., maximal minors.   We do this by defining a morphism of tori $(\Bbbk^\times)^{\binom{n}{2}}\rightarrow(\Bbbk^\times)^{\binom{n}{r}}$ that restricts to the Veronese Grassmannian map.

\begin{remark} 
Since technically in this paper we tropicalize projective varieties by first lifting to affine cones and then restricting to dense tori, by a slight abuse of terminology we shall use the term \emph{Veronese Grassmannian map} to refer to the rational map $\Gr(2,n) \dashrightarrow\Gr(r,n)$ in Definition~\ref{def:higherVeronese} as well as the induced morphism $\Gr(2,n)^\circ \rightarrow \Gr(r,n)^\circ$ (the fact that the latter is indeed a regular morphism follows from Proposition \ref{prop:restrict} below); the context will always make clear which meaning is intended.  
\end{remark}

\begin{definition}
\label{definitionofphir}
For each $2 \le r \le n$, let 
\[\varphi_r : (\Bbbk^\times)^{{n \choose 2}} \to (\Bbbk^\times)^{{n \choose r}}\] be the group scheme morphism induced from the $\Bbbk$-algebra homomorphism
\begin{eqnarray*}
	\varphi_r^{*} : \Bbbk[x_{I}^{\pm}] &\to& \Bbbk[x_{ij}^{\pm}]\\
	x_{I} &\mapsto & \prod_{i,j \in I, i < j}x_{ij}.
\end{eqnarray*}
\end{definition}

\begin{proposition}\label{prop:restrict}
The monomial morphism $\varphi_r$ restricts to the Veronese Grassmannian map \[\Gr(2,n)^\circ \rightarrow \Gr(r,n)^\circ.\]
\end{proposition}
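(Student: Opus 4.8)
The plan is to coordinatize the Veronese Grassmannian map in its Pl\"ucker embedding and check that the resulting expression is exactly the monomial law $x_I\mapsto\prod_{i<j\in I}x_{ij}$ defining $\varphi_r^{*}$; everything then reduces to a single determinant identity of Vandermonde type. Concretely, I would represent a point of $\Gr(2,n)^\circ$ by a $2\times n$ matrix with columns $(x_j,y_j)^{\top}$, so that its $(i,j)$-th maximal minor is the Pl\"ucker coordinate $x_{ij}=x_iy_j-x_jy_i$; by the definition of the dense-torus restriction $(-)^\circ$, all of these minors are nonzero. Applying the matrix map \eqref{eqn:psi} produces the $r\times n$ matrix $N$ whose $j$-th column is the $(r-1)$-Veronese $(x_j^{r-1},x_j^{r-2}y_j,\dots,y_j^{r-1})^{\top}$, and the image point of $\Gr(r,n)$ has Pl\"ucker coordinates given by the maximal minors $P_I=\det N_I$ for $I=\{i_1<\cdots<i_r\}$.

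The key computation is the factorization $P_I=\prod_{a<b}x_{i_ai_b}$. I would obtain it by pulling the invertible scalar $x_{i_a}^{r-1}$ out of the $a$-th column of $N_I$, leaving a Vandermonde matrix whose entry in row $k$ and column $a$ is $t_a^{\,k}$ for $k=0,\dots,r-1$, where $t_a:=y_{i_a}/x_{i_a}$; its determinant is $\prod_{a<b}(t_b-t_a)$. Using $t_b-t_a=x_{i_ai_b}/(x_{i_a}x_{i_b})$ together with the bookkeeping fact that each index of $I$ occurs in exactly $r-1$ of the $\binom r2$ pairs, the prefactor $\prod_a x_{i_a}^{r-1}$ cancels the accumulated denominators $\prod_{a<b}x_{i_a}x_{i_b}$ precisely, yielding $P_I=\prod_{a<b}x_{i_ai_b}=\varphi_r^{*}(x_I)$. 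I would phrase this as an identity in the polynomial ring $\Bbbk[x_j,y_j]$ and then specialize, so that it holds over a field of arbitrary characteristic.

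Two remaining observations finish the argument. First, since every $x_{ij}$ is nonzero on $\Gr(2,n)^\circ$, each $P_I=\prod_{a<b}x_{i_ai_b}$ is a product of nonzero scalars and hence nonzero; thus the image lands in the dense torus of $\AA^{\binom nr}$, that is, in $\Gr(r,n)^\circ$. This simultaneously shows that $\varphi_r$ carries $\Gr(2,n)^\circ$ into $\Gr(r,n)^\circ$ and that the a priori only rational Veronese Grassmannian map is in fact regular there, as promised in the preceding remark. Second, I would record compatibility with the $\GL_2$-action: under $M\mapsto gM$ the Veronese matrix transforms by $\mathrm{Sym}^{r-1}(g)$, whose determinant is $(\det g)^{\binom r2}$, exactly matching the scaling of $\prod_{a<b}x_{i_ai_b}$ by $(\det g)^{\binom r2}$; this confirms that the monomial law descends correctly to the Grassmannians and is the intended coordinatization.

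I expect the only genuinely delicate step to be the exponent bookkeeping inside the Vandermonde factorization --- verifying that the $r-1$ copies of each $x_{i_a}$ produced by the Veronese map balance the denominators coming from the differences $t_b-t_a$ --- together with the care needed to present the factorization as a universal polynomial identity valid in every characteristic rather than as a computation carried out on the torus.
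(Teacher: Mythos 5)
Your proposal is correct and follows essentially the same route as the paper: the paper's proof likewise reduces everything to the observation that the image matrix is a column-homogenized Vandermonde matrix, so each maximal minor indexed by $I$ factors as $\prod_{i<j\in I}(x_iy_j-x_jy_i)$, matching the monomial law $\varphi_r^{*}(x_I)=\prod_{i<j\in I}x_{ij}$. You merely write out the Vandermonde factorization and exponent bookkeeping explicitly (the paper invokes the identity directly, via the first fundamental theorem of invariant theory) and add the torus-landing and $\GL_2$-equivariance checks, which the paper records separately in the surrounding remark and after Definition~\ref{def:higherVeronese}.
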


\begin{proof}
By the first fundamental theorem of invariant theory, we need to see how the maximal minors of the right-hand matrix in \eqref{eqn:psi} depend on the maximal minors of the left-hand matrix.  But the right-hand matrix is just a Vandermonde matrix where the columns have been homogenized, so for any collection  $I \in\binom{[n]}{r}$ of columns the corresponding maximal minor is \[\prod_{i,j\in I, i < j}(x_iy_j - x_jy_i) = \prod_{i,j \in I, i < j}m_{ij},\] where $m_{ij}$ denotes the $ij$-maximal minor of the left-hand matrix.  This shows the restricted morphism $\varphi_r|_{\Gr(2,n)^\circ}$ is indeed induced by the column-wise Veronese map.
\end{proof}

Since $\varphi_r$ is a toric morphism, we can now apply functoriality of tropicalization for toric morphisms \cite[Corollary~3.2.13]{MS15} which tells us that the tropicalization of the closure (in $(\Bbbk^\times)^{\binom{n}{r}}$) of the image of the Veronese Grassmannian map coincides with the image of the tropicalized map $\Trop(\varphi_r)$ restricted to the tropical Grassmannian $\Gr^{\trop}(2,n)$.  As we discussed earlier, $\Gr^{\trop}(2,n)$ is the space of $2$-dissimilarity vectors, and $\Trop(\varphi_r)$ is the linear map described explicitly in the following proposition (whose proof is trivial).  Our next steps are to go through this functoriality argument in detail, and to interpret $\Trop(\varphi_r)$ as sending $2$-dissimilarity vectors to the weighted dissimilarity vectors that we introduce next.  

\begin{proposition}\label{prop:tropphi}
The monomial morphism \[\varphi_r : (\Bbbk^\times)^{{n \choose 2}} \to (\Bbbk^\times)^{{n \choose r}}\] tropicalizes to the linear map \[\Trop(\varphi_r) : \mathbb{R}^{\binom{n}{2}} \to \mathbb{R}^{\binom{n}{r}}\] whose $I$-component, for $I\in\binom{[n]}{r}$, is $\sum_{i,j\in I,i<j} x_{ij}$.
\end{proposition}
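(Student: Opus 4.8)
The plan is to read off $\Trop(\varphi_r)$ directly from the valuation-theoretic definition of tropicalization, exploiting that $\varphi_r$ is a monomial morphism so that no hard tropical geometry is needed. First I would invoke the general principle, recorded in \cite[Corollary~3.2.13]{MS15}, that the tropicalization of a homomorphism of tori is the induced linear map on cocharacter lattices tensored with $\mathbb{R}$. Concretely, if a torus morphism is given on coordinate rings by $y_J \mapsto \prod_i x_i^{a_{iJ}}$ for an integer exponent matrix $(a_{iJ})$, then its tropicalization is the linear map whose $J$-component is $\sum_i a_{iJ} x_i$.

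The second step is to identify this exponent matrix for $\varphi_r^*$ from Definition~\ref{definitionofphir}. Since $\varphi_r^*(x_I) = \prod_{i,j\in I,\, i<j} x_{ij}$, the exponent of the variable $x_{ij}$ appearing in $\varphi_r^*(x_I)$ equals $1$ when $\{i,j\} \subseteq I$ and $0$ otherwise. Substituting this into the general formula from the previous paragraph yields at once that the $I$-component of $\Trop(\varphi_r)$ is $\sum_{i,j\in I,\, i<j} x_{ij}$, which is exactly the claimed description.

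For completeness I would also spell out the underlying first-principles computation, which is perhaps the cleanest way to see the statement: a point of the source torus over a valued extension of $\Bbbk$ is a tuple $(t_{ij})_{i<j}$, and by Definition~\ref{definitionofphir} its image under $\varphi_r$ has $I$-coordinate $\prod_{i,j\in I,\, i<j} t_{ij}$. Because tropicalization takes coordinatewise valuations and the valuation turns products into sums, the valuation of this $I$-coordinate is $\sum_{i,j\in I,\, i<j} \mathrm{val}(t_{ij})$; writing $x_{ij} = \mathrm{val}(t_{ij})$ for the source tropical coordinates recovers the stated $I$-component.

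As the authors note, there is no genuine obstacle: the entire content is the product-to-sum property of valuations, i.e.\ that a monomial map tropicalizes to the linear map given by its exponent matrix. The only point deserving a moment of care is matching conventions, namely confirming that the relevant linear map is the one attached to the exponent matrix itself (so that the $I$-component sums over the pairs contained in $I$) rather than its transpose; the explicit computation in the previous paragraph pins this down unambiguously.
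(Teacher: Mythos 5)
Your proposal is correct and matches the paper's intended argument exactly: the paper offers no written proof, declaring it trivial, and the entire content is precisely what you spell out, namely that a monomial map tropicalizes to the linear map given by its exponent matrix, which for $\varphi_r^*(x_I)=\prod_{i,j\in I,\,i<j}x_{ij}$ has the $0/1$ entries you identify. One minor citation point: \cite[Corollary~3.2.13]{MS15} is the functoriality statement for images of subvarieties under monomial maps (which the paper invokes later, in Theorem~\ref{thm:tropim}), while the fact that $\Trop(\varphi_r)$ is the linear map attached to the exponent matrix is part of the surrounding definitions in that reference rather than the corollary itself --- but this is harmless, since your first-principles valuation computation establishes it independently and also settles the transpose convention.
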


\subsection{Weighted dissimilarity maps}

\begin{definition}
For each $2 \le r \le n$, let 
\[d^{wt}_r : \mathcal{T}_n \rightarrow \mathbb{R}^{\binom{n}{r}}\]
be the \emph{weighted dissimilarity map} sending a phylogenetic tree $T$ to the \emph{weighted dissimilarity vector} defined as follows.  For each $I \in \binom{[n]}{r}$, let $T(I)$ be the $r$-leaf subtree of $T$ spanned by the leaves indexed by $I$, and let the $I$-component of $d^{wt}_r(T)$ be the sum of the entries of the dissimilarity vector $d_2(T(I))$.
\end{definition}

In other words, $d^{wt}_r$ records for each $r$-leaf subtree the sum of all $\binom{r}{2}$ path lengths in the subtree.  The usual $r$-dissimilarity map $d_r$ records for each $r$-leaf subtree the sum of all edge lengths in the subtree, whereas $d^{wt}_r$ is a ``weighted'' variant because it counts each edge with multiplicity equal to the number of leaf-to-leaf paths in the subtree in which the edge occurs. 

\begin{remark}
Note that $d^{wt}_3 = 2d_3$ since in a $3$-leaf tree every edge is traversed exactly twice among the $\binom{3}{2}=3$ possible leaf-to-leaf paths, whereas for $r>3$ the usual and weighted dissimilarity maps are, in general, not simply scalar multiples of each other.
\end{remark}

We will later show that the image of the weighted dissimilarity map is a tropical variety (Theorem~\ref{thm:tropim}) and hence in particular is a balanced polyhedral complex.  Before getting to that general proof, one might be curious to see how the matrix used to establish non-balancing in the proof of Theorem~\ref{prop:nonbal} changes when using the weighted dissimilarity map.

\begin{example}
Replacing every instance of the dissimilarity map $d_4$ with the weighted dissimilarity map $d^{wt}_4$ in the construction of the $13\times35$ matrix in the proof of Theorem~\ref{prop:nonbal} yields the following:
{\footnotesize
\[
	\arraycolsep=1pt
	\left(\begin{array}{ccccccccccccccccccccccccccccccccccc}
3&3&3&3&3&3&3&3&3&3&3&3&3&3&3&3&3&3&3&3&0&0&0&0&0&0&0&0&0&0&0&0&0&0&0\\
3&3&3&3&3&3&3&3&3&3&0&0&0&0&0&0&0&0&0&0&3&3&3&3&3&3&3&3&3&3&0&0&0&0&0\\
3&3&3&3&0&0&0&0&0&0&3&3&3&3&3&3&0&0&0&0&3&3&3&3&3&3&0&0&0&0&3&3&3&3&0\\
3&0&0&0&3&3&3&0&0&0&3&3&3&0&0&0&3&3&3&0&3&3&3&0&0&0&3&3&3&0&3&3&3&0&3\\
0&3&0&0&3&0&0&3&3&0&3&0&0&3&3&0&3&3&0&3&3&0&0&3&3&0&3&3&0&3&3&3&0&3&3\\
0&0&3&0&0&3&0&3&0&3&0&3&0&3&0&3&3&0&3&3&0&3&0&3&0&3&3&0&3&3&3&0&3&3&3\\
0&0&0&3&0&0&3&0&3&3&0&0&3&0&3&3&0&3&3&3&0&0&3&0&3&3&0&3&3&3&0&3&3&3&3\\
0&3&3&0&3&3&0&4&3&3&3&3&0&4&3&3&4&3&3&4&3&3&0&4&3&3&4&3&3&4&4&3&3&4&4\\
4&3&3&3&3&3&3&0&0&0&4&4&4&3&3&3&3&3&3&0&4&4&4&3&3&3&3&3&3&0&4&4&4&3&3\\
4&4&4&4&4&4&4&4&4&4&3&3&3&3&3&3&3&3&3&3&3&3&3&3&3&3&3&3&3&3&0&0&0&0&0\\
0&3&3&3&3&3&3&4&4&4&3&3&3&4&4&4&4&4&4&3&3&3&3&4&4&4&4&4&4&3&4&4&4&3&3\\
4&3&3&4&3&3&4&0&3&3&4&4&3&3&4&4&3&4&4&3&4&4&3&3&4&4&3&4&4&3&4&3&3&4&4\\
4&4&4&3&4&4&3&4&3&3&3&3&4&3&4&4&3&4&4&4&3&3&4&3&4&4&3&4&4&4&0&3&3&3&3
	\end{array}\right).
\]
}
Recall that the first $10$ rows are the images of the split metrics defined by the graph $G$ in Figure~\ref{fig:treeexample}, and the last $3$ rows are the images of the split metrics defined by the edges $E_i$ in Figure~\ref{fig:degenerations}.   Note that $d^{wt}_4$ and $d_4$ indeed are not scalar multiples of each other, but as expected the locations of the zero entries in this matrix are the same as in the previous matrix.  For this matrix, the first $10$ rows are linearly independent but the whole matrix has rank $12$. Hence, the last three rows are linearly dependent modulo the linear subspace spanned by the previous $10$, and this is what allows for balancing to hold here.  Explicitly, the one-dimensional left kernel is spanned by the vector
\[
(0,0,0,0,0,0,1,1,1,1,-1,-1,-1),
\]
which tells us that the sum of the images of the split metrics given by the $3$ edges $E_i$ equals the sum of the images of the split metrics given by the $4$ coincident edges in the graph $G$.
\end{example}

The following proposition, whose proof follows immediately from the definition and Proposition~\ref{prop:tropphi}, plays a fundamental role in this paper (indeed, we were led to the definition of the weighted dissimilarity map primarily so that this holds):

\begin{proposition}\label{prop:compo}
The weighted dissimilarity map factors as follows:
\[d_r^{wt} = \Trop(\varphi_r)\circ d_2.\]
\end{proposition}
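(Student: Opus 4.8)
The plan is to verify the asserted factorization coordinate-by-coordinate. I fix a phylogenetic tree $T \in \mathcal{T}_n$ and an index $I \in \binom{[n]}{r}$, and I compute the $I$-th component of each side of the claimed equality, showing they agree.

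First I would unwind the right-hand side. By Proposition~\ref{prop:tropphi}, the $I$-component of $\Trop(\varphi_r)(d_2(T))$ equals $\sum_{i,j \in I,\, i<j} (d_2(T))_{ij}$, where $(d_2(T))_{ij}$ denotes the length of the unique path in $T$ joining the leaves $i$ and $j$. On the left-hand side, by definition of $d^{wt}_r$ the $I$-component of $d^{wt}_r(T)$ is the sum of all entries of the dissimilarity vector $d_2(T(I))$, that is, $\sum_{i,j \in I,\, i<j} (d_2(T(I)))_{ij}$, where $T(I)$ is the $r$-leaf subtree spanned by the leaves indexed by $I$ and the path lengths are now measured \emph{within} $T(I)$.

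The crux is therefore the identity $(d_2(T(I)))_{ij} = (d_2(T))_{ij}$ for every pair $i,j \in I$. I would establish this directly from the construction of the spanned subtree: $T(I)$ is obtained from $T$ by restricting to the union of the leaf-to-leaf paths among the leaves indexed by $I$ and then suppressing any resulting degree-$2$ vertices, where suppressing a degree-$2$ vertex replaces its two incident edges by a single edge whose length is their sum. Since the unique $i$-to-$j$ path in $T$ already lies inside this union whenever $i,j \in I$, and since edge-suppression preserves total path length (it merges edge lengths additively), the $i$-to-$j$ distance is unchanged in passing from $T$ to $T(I)$. Granting this, both $I$-components reduce to the identical sum $\sum_{i,j \in I,\, i<j} (d_2(T))_{ij}$, and the factorization $d^{wt}_r = \Trop(\varphi_r) \circ d_2$ follows termwise.

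The only real subtlety is making precise the distance-preservation claim, but this is a standard and elementary feature of passing to a spanned subtree of a phylogenetic tree; I anticipate no obstacle beyond this bookkeeping, which is consistent with the authors' remark that the proof follows immediately from the definition and Proposition~\ref{prop:tropphi}.
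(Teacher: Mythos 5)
Your proof is correct and follows the same route the paper intends when it says the statement ``follows immediately from the definition and Proposition~\ref{prop:tropphi}'': a coordinate-wise comparison using the fact that leaf-to-leaf distances in the spanned subtree $T(I)$ (with degree-$2$ vertices suppressed additively) agree with those in $T$. You have merely made explicit the distance-preservation step that the authors leave implicit.
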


Accordingly, in order to better understand the weighted dissimilarity map, we need to first establish a key property of the linear map $\Trop(\varphi_r)$.

\begin{lemma}\label{lem:phiinjective}
For $r \le n-2$ the map $\Trop(\varphi_r)$ is injective, and for $r \in \{2,n-2\}$ it is bijective. 
\end{lemma}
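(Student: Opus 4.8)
The map $\Trop(\varphi_r)$ is linear, represented by the $\binom{n}{r}\times\binom{n}{2}$ inclusion matrix $M$ whose $(I,\{i,j\})$-entry is $1$ when $\{i,j\}\subseteq I$ and $0$ otherwise; this is just the content of Proposition~\ref{prop:tropphi} read off as a matrix. So my plan is to show $\ker M = 0$ for all $2\le r\le n-2$, and then deduce bijectivity in the two extreme cases for free. Indeed, for $r=2$ the $I=\{i,j\}$ component is $\sum_{\{a,b\}\subseteq I}x_{ab}=x_{ij}$, so $M$ is literally the identity; and for $r=n-2$ the source and target are equidimensional, $\binom{n}{n-2}=\binom{n}{2}$, so an injective linear map between them is automatically bijective. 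Thus the entire lemma reduces to injectivity.

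To prove injectivity, suppose $x=(x_{ij})\in\ker M$, i.e.\ $\sum_{\{i,j\}\subseteq I}x_{ij}=0$ for every $I\in\binom{[n]}{r}$. The first step is to difference two such relations coming from $r$-sets that agree in all but one index. Writing $I=A\cup\{k\}$ and $I'=A\cup\{l\}$ with $|A|=r-1$ and $k,l\notin A$, subtracting the two vanishing sums cancels every term supported on $A$ and leaves
\begin{equation*}
\sum_{a\in A}\left(x_{ak}-x_{al}\right)=0. \tag{$\ast$}
\end{equation*}
Obtaining $(\ast)$ only needs $r\le n-1$. The decisive second step is to difference two instances of $(\ast)$ whose index sets $A,A'$ differ by a single swap $a_0\leftrightarrow a_1$ inside $[n]\setminus\{k,l\}$: this isolates $x_{a_0 k}-x_{a_0 l}=x_{a_1 k}-x_{a_1 l}$, so the quantity $x_{ak}-x_{al}$ is independent of $a\in[n]\setminus\{k,l\}$. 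Calling this common value $c_{kl}$ and feeding it back into $(\ast)$ gives $(r-1)c_{kl}=0$, hence $c_{kl}=0$ since $r\ge 2$. Therefore $x_{ak}=x_{al}$ whenever $a\notin\{k,l\}$, which forces each $x_{ak}$ to be independent of $k$; combined with the symmetry $x_{ak}=x_{ka}$ this makes all entries of $x$ equal to a single scalar $v$, and the original relation $\binom{r}{2}v=0$ forces $v=0$.

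The one place the hypothesis $r\le n-2$ is genuinely used is the swap step: to carry it out I must choose $A\subseteq[n]\setminus\{k,l\}$ of size $r-1$ containing a prescribed $a_0$ and avoiding a prescribed $a_1$, which requires $|[n]\setminus\{k,l\}|=n-2\ge r$. Hence the main obstacle is purely combinatorial bookkeeping: verifying there is enough room for these swaps, and checking that the swap moves connect all of $[n]\setminus\{k,l\}$ so that $c_{kl}$ is truly constant in $a$. Everything else is routine linear algebra. As an alternative to this elementary argument, one could note that the statement is an instance of Gottlieb's theorem on the full rank of set-inclusion matrices $W_{2,r}$ over $\mathbb{Q}$ in the range $2+r\le n$, and cite that directly; I would prefer to include the self-contained differencing argument since it is short and characteristic-free.
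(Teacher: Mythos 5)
Your proof is correct, but it takes a genuinely different route from the paper. The paper proves injectivity by exhibiting an explicit left inverse: a matrix $M^{+}$ whose $(J,I)$-entry is $(-1)^{i}\frac{r-2}{r-i}\binom{n-2}{r-i}^{-1}$ with $i=|I\cap J|$, and then verifying $M^{+}M=\mathrm{Id}$ entry by entry via binomial-coefficient identities, with bijectivity in the extreme cases following from the same dimension count you use. Your differencing argument avoids all binomial bookkeeping and has the virtue of making completely transparent where $r\le n-2$ enters (the room needed to choose $A\subseteq[n]\setminus\{k,l\}$ of size $r-1$ with $a_0\in A$, $a_1\notin A$), whereas in the paper the hypothesis is buried in the definition of $M^{+}$. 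Note also that your connectivity hedge is unneeded: a single swap handles each pair $a_0,a_1$ directly. What the paper's approach buys is the explicit matrix $M^{+}$ itself, which is reused: the remark following the lemma interprets $M^{+}$ as an explicit linear reconstruction of the $2$-dissimilarity vector (hence the tree) from the weighted $r$-dissimilarity vector, and Remark~\ref{boccicoolswork} invokes a left inverse of $\Trop(\varphi_r)$; your proof yields such an inverse only abstractly, which still suffices logically but loses the formula. One small quibble: your argument is not literally characteristic-free, as it divides by $r-1$ and $\binom{r}{2}$; this is harmless since $\Trop(\varphi_r)$ is a map of real vector spaces, and your citation of Gottlieb's theorem on the full rank of $W_{2,r}$ over $\mathbb{Q}$ for $r+2\le n$ is an accurate alternative.
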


\begin{proof}
This is trivial for $r=2$, since $\varphi_2$ is the identity map, so assume $r \ge 3$. Let $M$ be the matrix associated to $\Trop(\varphi_r)$, namely: 
\begin{equation}\label{eqn:matrixM}
	M_{IJ} = \begin{cases}1 & \mbox{if}~J \subseteq I\\
	0 & \mbox{otherwise}.\end{cases}
\end{equation}
We will construct an explicit left-inverse of $M$.  Define the ${n \choose 2} \times {n \choose r}$ matrix $M^{+}$ by 
\[
	M^{+}_{JI} = (-1)^{i}\frac{r-2}{r-i}\cdot \frac{1}{{n-2 \choose r-i}}, 
\]
where $i = |I \cap J|$. That is, for $J \in {[n] \choose 2}$ and $I \in {[n] \choose r}$ we have 
\[
	M^{+}_{JI} = \begin{cases}
	\frac{1}{{n-2\choose r-2}} & \mbox{if}~J\subseteq I\\
	-\frac{r-2}{r-1}\cdot \frac{1}{{n-2 \choose r-1}} & \mbox{if}~|J \cap I| = 1\\
	\frac{r-2}{r}\cdot \frac{1}{{n-2 \choose r}} & \mbox{if}~J \cap I = \emptyset.
	\end{cases}
\]
We will show that $M^{+}M = \mathrm{Id}$ by directly calculating its entries. First of all, 
\[
\	(M^{+}M)_{JJ} = \sum_{I}M_{JI}^{+}M_{IJ} = \sum_{I \supset J}M_{JI}^{+} = \sum_{I \supset J}\frac{1}{{n-2 \choose r-2}} = \frac{1}{{n-2 \choose r-2}}{n-2 \choose r-2} = 1.
\]
For $J, K \in {[n] \choose 2}$, we have 
\[
	(M^{+}M)_{JK} = \sum_{I}M_{JI}^{+}M_{IK}\\
	= \sum_{I \supset J, K}\frac{1}{{n-2 \choose r-2}} - \sum_{|I \cap J| = 1, I \supset K}\frac{r-2}{r-1}\cdot \frac{1}{{n-2 \choose r-1}} + \sum_{I \cap J = \emptyset, I \supset K}\frac{r-2}{r}\cdot \frac{1}{{n-2 \choose r}}.
\]
If $|J \cap K| = 1$, then the condition in the third summation is impossible, since $J \cap K \ne \emptyset$, so
\[
	(M^{+}M)_{JK} = \frac{{n-3 \choose r-3}}{{n-2 \choose r-2}} - \frac{r-2}{r-1}\cdot\frac{{n-3 \choose r-2}}{{n-2\choose r-1}} = 0,
\]
where the last equality follows from an elementary calculation.  If instead $J \cap K = \emptyset$ then 
\[
	(M^{+}M)_{JK} = \frac{{n-4 \choose r-4}}{{n-2 \choose r-2}} - \frac{r-2}{r-1}\cdot \frac{2{n-4 \choose r-3}}{{n-2 \choose r-1}} + \frac{r-2}{r}\cdot \frac{{n-4 \choose r-2}}{{n-2 \choose r}} = 0,
\]
where again the last equality is an elementary calculation.

The equality $\dim \RR^{{n \choose 2}} = \dim \RR^{{n \choose n-2}}$ then implies surjectivity when $r=2$ or $r=n-2$. 
\end{proof}

\begin{remark}
By Proposition~\ref{prop:compo}, the matrix $M^{+}$ constructed in the preceding proof, when viewed as a linear map $\mathbb{R}^{\binom{n}{r}} \rightarrow \mathbb{R}^{\binom{n}{2}}$, sends the weighted $r$-dissimilarity vector of a phylogenetic tree to the corresponding $2$-dissimilarity vector.
\end{remark}

\begin{corollary}\label{cor:injectivedrwt}
For $r \le n-2$, the weighted dissimilarity map $d_r^{wt} : \mathcal{T}_n \rightarrow \mathbb{R}^{\binom{n}{r}}$ is injective.
\end{corollary}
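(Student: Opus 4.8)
The plan is to deduce the result immediately from the factorization already established together with the injectivity lemma just proved. By Proposition~\ref{prop:compo} we have the decomposition $d_r^{wt} = \Trop(\varphi_r) \circ d_2$, which expresses the weighted dissimilarity map as a composition of two maps whose injectivity we can control separately. Since a composition of injective maps is injective, it suffices to verify injectivity of each factor in the relevant range.

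First I would invoke the classical fact, recalled in Section~\ref{sec:background} above (due to Buneman \cite{Bun71}), that the dissimilarity map $d_2 : \mathcal{T}_n \to \mathbb{R}^{\binom{n}{2}}$ is injective; this is precisely the statement that a phylogenetic tree is recoverable from its vector of pairwise leaf-to-leaf path lengths. Next I would apply Lemma~\ref{lem:phiinjective}, which guarantees that the linear map $\Trop(\varphi_r) : \mathbb{R}^{\binom{n}{2}} \to \mathbb{R}^{\binom{n}{r}}$ is injective exactly in the hypothesized range $r \le n-2$; indeed the explicit left-inverse $M^{+}$ constructed there, satisfying $M^{+}M = \mathrm{Id}$, directly witnesses this injectivity.

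Combining these two observations with the factorization $d_r^{wt} = \Trop(\varphi_r) \circ d_2$ yields injectivity of $d_r^{wt}$ for $r \le n-2$, as claimed. There is essentially no obstacle remaining at this stage: all of the genuine work has been front-loaded into Lemma~\ref{lem:phiinjective}, whose proof produces the explicit matrix $M^{+}$ together with the identity $M^{+}M = \mathrm{Id}$. In fact, the remark following that lemma makes the recovery entirely concrete --- composing with $M^{+}$ returns the $2$-dissimilarity vector from a weighted $r$-dissimilarity vector --- so one obtains a constructive inverse to $d_r^{wt}$ on its image, which is arguably the cleanest way to exhibit the injectivity.
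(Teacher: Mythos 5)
Your proof is correct and is essentially identical to the paper's own argument: both deduce injectivity from the factorization $d_r^{wt} = \Trop(\varphi_r)\circ d_2$ of Proposition~\ref{prop:compo}, Buneman's injectivity of $d_2$, and the injectivity of $\Trop(\varphi_r)$ for $r \le n-2$ from Lemma~\ref{lem:phiinjective}. Your closing observation about the explicit left-inverse $M^{+}$ giving a constructive recovery is a nice touch that the paper itself records in the remark following that lemma.
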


\begin{proof}
Lemma~\ref{lem:phiinjective} and Proposition~\ref{prop:compo}, together with the fact that the $2$-dissimilarity map is injective, show that $d_r^{wt}$ is a composition of injective maps, and hence is injective.  
\end{proof}

\begin{corollary}\label{cor:identification}
For $r \le n-2$, the space of phylogenetic trees $\mathcal{T}_n$ and the space of weighted $r$-dissimilarity vectors are isomorphic as combinatorial polyhedral complexes. Furthermore, if $r \le \frac{n+1}{2}$ then they are also isomorphic to the space of $r$-dissimilarity vectors.
\end{corollary}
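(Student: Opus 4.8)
The plan is to exhibit both the weighted and the unweighted dissimilarity maps as injective maps that are \emph{linear on each cone} of the natural polyhedral structure of $\mathcal{T}_n$, and then to invoke a general principle: a continuous map that is injective and restricts to a linear map on each cell of a polyhedral complex induces an isomorphism of combinatorial polyhedral complexes onto its image. Recall from \S\ref{sec:nonbal} that $\mathcal{T}_n$ is decomposed into the cones $\mathcal{T}_n^G$ indexed by the combinatorial tree types $G$, with face relations given by edge contractions, and that $d_r$ is linear on each $\mathcal{T}_n^G$. The same holds for $d_r^{wt}$: by Proposition~\ref{prop:compo} we have $d_r^{wt} = \Trop(\varphi_r)\circ d_2$, where $d_2$ is linear on each cone and $\Trop(\varphi_r)$ is a (global) linear map by Proposition~\ref{prop:tropphi}.

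I would first establish the general principle. Fix a maximal cone $\tau = \mathcal{T}_n^G$, on which the map $f$ (either $d_r$ or $d_r^{wt}$) agrees with a single linear map $L$. Since $f$ is injective, $L$ is injective on $\tau$; because $\tau$ is full-dimensional inside its linear span (perturbing an interior point of $\tau$ in both directions along a hypothetical kernel vector would violate injectivity), $L$ is injective on $\langle\tau\rangle$, hence restricts to a linear isomorphism $\langle\tau\rangle \xrightarrow{\sim} L(\langle\tau\rangle)$. Consequently $\tau \xrightarrow{\sim} f(\tau)$ is an isomorphism of polyhedral cones carrying faces to faces. For the gluing, global injectivity gives $f(\tau_1)\cap f(\tau_2) = f(\tau_1\cap\tau_2)$ for any two cones: if $f(x_1)=f(x_2)$ with $x_i\in\tau_i$ then $x_1=x_2\in\tau_1\cap\tau_2$. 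Since $\tau_1\cap\tau_2$ is a common face in $\mathcal{T}_n$, its image is a common face of $f(\tau_1)$ and $f(\tau_2)$. Thus the cells $\{f(\mathcal{T}_n^G)\}$ form a polyhedral complex whose face poset is isomorphic, via $\sigma\mapsto f(\sigma)$, to that of $\mathcal{T}_n$, with dimensions preserved; this is exactly an isomorphism of combinatorial polyhedral complexes.

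It then remains to apply this. For the weighted map, Corollary~\ref{cor:injectivedrwt} supplies injectivity of $d_r^{wt}$ for $r\le n-2$, so the principle yields $\mathcal{T}_n\cong d_r^{wt}(\mathcal{T}_n)$ as combinatorial polyhedral complexes. (Alternatively, the first part follows directly from the factorization of Proposition~\ref{prop:compo}: $d_2$ is an isomorphism of polyhedral complexes onto $\Gr^{\trop}(2,n)$ by \cite{SS04}, and $\Trop(\varphi_r)$ is a global linear injection by Lemma~\ref{lem:phiinjective}, hence a linear isomorphism onto its image, so the composite is an isomorphism onto its image.) For the unweighted map, the injectivity of $d_r$ in the range $r\le\frac{n+1}{2}$ is exactly the theorem of Pachter--Speyer \cite{PS04}, so the same principle gives $\mathcal{T}_n\cong d_r(\mathcal{T}_n)$. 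Composing these isomorphisms (noting that $r\le\frac{n+1}{2}$ implies $r\le n-2$ for $n\ge 5$, with the few smaller cases immediate since there $\varphi_2$ is the identity) shows that in the range $r\le\frac{n+1}{2}$ all three spaces are mutually isomorphic.

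I expect the only real obstacle to be the face-preservation in the general principle: a priori a piecewise-linear map could subdivide cells, merge distinct cells, or send a face into the relative interior of the image of a larger cell, and any of these would break the combinatorial isomorphism. The point to emphasize is that global injectivity simultaneously forces each $f|_\tau$ to be a linear isomorphism onto its image cone (hence face-preserving on each cone) and forces the images of distinct cones to overlap only in the image of their common face; these two facts together are what upgrade ``injective and cell-wise linear'' to ``combinatorial isomorphism.'' The restriction to $r\le\frac{n+1}{2}$ in the unweighted statement is not an artifact of the method but reflects that $d_r$ is known to be injective only in that range, whereas Corollary~\ref{cor:injectivedrwt} extends injectivity, and hence the isomorphism, of the weighted map all the way to $r\le n-2$.
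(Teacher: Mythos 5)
Your proposal is correct and follows the same route as the paper, which proves the corollary in one line from the injectivity of $d_r^{wt}$ (Corollary~\ref{cor:injectivedrwt}), the injectivity of $d_r$ for $r\le\frac{n+1}{2}$ from \cite{PS04}, and the fact that both maps are linear on each polyhedral stratum of $\mathcal{T}_n$. Your write-up merely makes explicit the folklore principle (injective plus cone-wise linear implies combinatorial isomorphism onto the image, with global injectivity forcing each $f|_\tau$ to extend to a linear isomorphism on $\langle\tau\rangle$ and forcing images of cones to meet only along images of common faces) that the paper leaves implicit.
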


\begin{proof}
This follows from the injectivity of $d_{r}^{wt}$ in Corollary~\ref{cor:injectivedrwt}, the additional injectivity of $d_{r}$ when $r \le \frac{n+1}{2}$, and the observation that both maps are linear on each polyhedral stratum of $\mathcal{T}_n$.
\end{proof}

Although both the dissimilarity map and the weighted dissimilarity map provide Euclidean embeddings of the space of phylogenetic trees, we have seen in \S\ref{sec:nonbal} that the former embedding is not a tropical variety; we show in the following subsection that the latter embedding is tropical and we use the Veronese Grassmannian map to produce an algebraic variety realizing it as a tropicalization.

\subsection{Back to Pachter--Speyer's second question}\label{ssec:weightedvariant}

Recall that the second question of Pachter--Speyer, whether the space of $r$-dissimilarity vectors is the tropicalization of the image of a rational map of Grassmannians, ended up being false for the plain reason that higher dissimilarity vectors are not a balanced polyhedral complex and hence cannot be a tropical variety.  We now establish a positive answer to the variant of Pachter--Speyer's second question where dissimilarity vectors are replaced with weighted dissimilarity vectors:

\begin{theorem}\label{thm:tropim}
For $r \le n$, the space of weighted $r$-dissimilarity vectors is the tropicalization of the image of the Veronese Grassmannian map $\Gr(2,n)^\circ \rightarrow \Gr(r,n)^\circ$. 
\end{theorem}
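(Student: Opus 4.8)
The plan is to assemble the pieces established earlier in this section by invoking functoriality of tropicalization for toric morphisms, exactly as foreshadowed in the discussion preceding Proposition~\ref{prop:tropphi}. By Proposition~\ref{prop:restrict}, the image of the Veronese Grassmannian map $\Gr(2,n)^\circ \to \Gr(r,n)^\circ$ is precisely $\varphi_r(\Gr(2,n)^\circ)$, the image of the closed subvariety $\Gr(2,n)^\circ \subseteq (\Bbbk^\times)^{\binom{n}{2}}$ (closed by our convention $X^\circ := X^{\mathrm{aff}} \cap (\Bbbk^\times)^N$) under the monomial morphism $\varphi_r$ of Definition~\ref{definitionofphir}. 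Hence the theorem reduces to computing $\Trop\bigl(\overline{\varphi_r(\Gr(2,n)^\circ)}\bigr)$, where the closure is taken inside the torus $(\Bbbk^\times)^{\binom{n}{r}}$; note this matches the intended meaning of ``tropicalization of the image,'' since tropicalization of a subvariety of a torus depends only on its closure in that torus.

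First I would apply the functoriality of tropicalization under toric morphisms \cite[Corollary~3.2.13]{MS15}. Since $\varphi_r$ is a homomorphism of tori, inducing the linear map $\Trop(\varphi_r)$ computed in Proposition~\ref{prop:tropphi}, this yields
\[
\Trop\bigl(\overline{\varphi_r(\Gr(2,n)^\circ)}\bigr) = \Trop(\varphi_r)\bigl(\Trop(\Gr(2,n)^\circ)\bigr).
\]
Next I would rewrite both factors on the right. By the Speyer--Sturmfels theorem \cite[Theorem~3.4]{SS04}, $\Trop(\Gr(2,n)^\circ) = \Gr^{\trop}(2,n)$ coincides with the space of phylogenetic trees embedded via the $2$-dissimilarity map, that is, $\Trop(\Gr(2,n)^\circ) = d_2(\mathcal{T}_n)$. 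Substituting this and invoking the factorization $d_r^{wt} = \Trop(\varphi_r) \circ d_2$ of Proposition~\ref{prop:compo} gives
\[
\Trop(\varphi_r)\bigl(d_2(\mathcal{T}_n)\bigr) = d_r^{wt}(\mathcal{T}_n),
\]
which is by definition the space of weighted $r$-dissimilarity vectors. Chaining the two displayed equalities yields the claimed identity, valid over the full range $r \le n$ since the argument never requires injectivity of $\Trop(\varphi_r)$.

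Most of the work is already done: that $\varphi_r$ is a toric morphism is immediate from Definition~\ref{definitionofphir}, the identification of its restriction with the Veronese Grassmannian map is Proposition~\ref{prop:restrict}, and its tropicalization is given in Proposition~\ref{prop:tropphi}. Consequently there is no deep obstacle here; the proof is an assembly. The one point I would flag as requiring genuine care—rather than difficulty—is the correct handling of the closure: the set-theoretic image $\varphi_r(\Gr(2,n)^\circ)$ need not be closed in $(\Bbbk^\times)^{\binom{n}{r}}$, so one must verify that \cite[Corollary~3.2.13]{MS15} is applied to $\overline{\varphi_r(\Gr(2,n)^\circ)}$ and that the linear image $\Trop(\varphi_r)\bigl(\Gr^{\trop}(2,n)\bigr)$ is genuinely a closed polyhedral set (which it is, being the image of a polyhedral complex under a linear map). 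With that bookkeeping in place, everything else is a direct substitution.
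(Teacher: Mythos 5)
Your argument is correct and follows the same skeleton as the paper's proof: functoriality of tropicalization for the monomial map $\varphi_r$ \cite[Corollary~3.2.13]{MS15}, the identification of $\varphi_r|_{\Gr(2,n)^\circ}$ with the Veronese Grassmannian map (Proposition~\ref{prop:restrict}), the Speyer--Sturmfels identification $\Trop(\Gr(2,n)^\circ)=d_2(\mathcal{T}_n)$, and the factorization $d_r^{wt}=\Trop(\varphi_r)\circ d_2$ (Proposition~\ref{prop:compo}). The one place you genuinely diverge is the closure issue, and there your hedge points the wrong way: the paper does not fall back on the convention that one tropicalizes the closure of the image; it proves outright (Lemma~\ref{lem:closed}) that $\varphi_r(\Gr(2,n)^\circ)$ \emph{is} Zariski closed in $(\Bbbk^\times)^{\binom{n}{r}}$, so the closure in the functoriality statement can simply be dropped. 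That lemma is the only non-formal ingredient of the paper's proof --- it runs a valuative criterion through a lift to the matrix space, identifies the limit as a configuration on a quasi-Veronese curve via \cite[Proposition 2.7]{CGMS18}, and uses Castelnuovo's lemma to replace the degenerate limit curve by an honest rational normal curve --- so your assertion that the image ``need not be closed'' is in fact false for this particular map. Your closure convention is defensible for the displayed equality in isolation (the tropicalization of a dense constructible subset agrees with that of its closure), so as a proof of the tropicalization statement your argument stands; but be aware of what it forgoes. The closedness is not optional bookkeeping for the paper: it is exactly what makes $\varphi_r(\Gr(2,n)^\circ)$ equal to the set-theoretic vanishing locus of $\mathcal{J}_{r,n}$ (Proposition~\ref{prop:ideallocus}), a closed condition, and the proof of Lemma~\ref{lem:closed} simultaneously furnishes the rational-normal-curve description of the image (Proposition~\ref{prop:GelMac}) on which the tropical basis theorem rests. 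Incidentally, if you wanted closedness cheaply in the range $r\le n-2$, it already follows from Lemma~\ref{lem:phiinjective}: injectivity of $\Trop(\varphi_r)$ means the character-lattice map of $\varphi_r$ has full rank, hence finite cokernel index, so $\varphi_r$ is a finite morphism onto a subtorus and therefore carries the closed set $\Gr(2,n)^\circ$ to a closed set --- though the theorem as stated covers all $r\le n$, which is why the paper's geometric argument earns its keep.
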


\begin{proof}
By functoriality of tropicalization with respect to toric morphisms \cite[Corollary~3.2.13]{MS15}, we have that
\[
\Trop(\varphi_r)\left(\Gr^{\trop}(2,n)\right)=\Trop\left(\overline{\varphi_r(\Gr(2,n)^\circ)}\right).
\]
By Proposition \ref{prop:restrict}, $\varphi_r(\Gr(2,n)^\circ)$ is the image of the Veronese Grassmannian map; by Lemma~\ref{lem:closed}, below, this image is closed in the torus so we can ignore the Zariski closure in the right-hand side of this equality; by Proposition~\ref{prop:compo}, the left-hand side is $d_r^{wt}(\mathcal{T}_n)$. 
\end{proof}

\begin{lemma}\label{lem:closed}
For $r \le n$, the image $\varphi_r(\Gr(2,n)^\circ)$ is closed in $(\Bbbk^\times)^{{n \choose r}}$.
\end{lemma}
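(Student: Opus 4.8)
The plan is to exploit that $\varphi_r$ is a homomorphism of tori, so that closedness of the image can be read off from the combinatorics of its kernel rather than from any delicate analysis of the configuration geometry. First I would factor $\varphi_r$ through the subtorus $S := \varphi_r((\Bbbk^\times)^{\binom{n}{2}}) \subseteq (\Bbbk^\times)^{\binom{n}{r}}$, which is automatically a \emph{closed} subtorus since it is the image of a homomorphism of tori. Because $S$ is closed in the ambient torus, it suffices to prove that $\varphi_r(\Gr(2,n)^\circ)$ is closed in $S$. I would also record two facts to be used throughout: $\Gr(2,n)^\circ = \Gr(2,n)^{\mathrm{aff}}\cap (\Bbbk^\times)^{\binom{n}{2}}$ is closed in the source torus, being the intersection of the affine cone with the dense torus, and $\varphi_r$ is surjective onto $S$ by construction.

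The heart of the argument is the range $2 \le r \le n-2$, where I would show that $\varphi_r$ is a \emph{finite} morphism onto $S$. Indeed, the cocharacter map of $\varphi_r$ is exactly the integral matrix $M$ of Proposition~\ref{prop:tropphi} computing $\Trop(\varphi_r)$, and by Lemma~\ref{lem:phiinjective} this map is injective; hence the identity component of $\ker \varphi_r$, whose cocharacter lattice is $\ker M$, is trivial, so $\ker \varphi_r$ is finite. A surjective homomorphism of tori with finite kernel is an isogeny, hence a finite (in particular proper, and therefore closed) morphism $\varphi_r : (\Bbbk^\times)^{\binom{n}{2}} \to S$. Since $\Gr(2,n)^\circ$ is closed in the source, its image $\varphi_r(\Gr(2,n)^\circ)$ is closed in $S$, and therefore closed in $(\Bbbk^\times)^{\binom{n}{r}}$, as desired.

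For the two remaining cases $r = n-1$ and $r = n$ the kernel of $\varphi_r$ is positive-dimensional, so the preceding properness argument breaks down; this is where I expect the main difficulty to lie. Here I would instead show that the image is the entire ambient torus, which is trivially closed. For $r = n$ the target is $(\Bbbk^\times)^{\binom{n}{n}} = \Bbbk^\times$ and the single coordinate is the product of all Pl\"ucker coordinates, which clearly takes every nonzero value on $\Gr(2,n)^\circ$, so the image is all of $\Bbbk^\times$. For $r = n-1$, I would use the identification $\Gr(n-1,n)^\circ = (\Bbbk^\times)^n$ (the Pl\"ucker embedding of $\Gr(n-1,n)\cong\mathbb{P}^{n-1}$ is an isomorphism) together with the classical fact that any $n$ points in linearly general position in $\mathbb{P}^{n-2}$ lie on a rational normal curve; via Proposition~\ref{prop:restrict} this says precisely that every configuration with all maximal minors nonzero arises from a point of $\Gr(2,n)^\circ$, so the image is the whole torus.

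The step I expect to require the most care is making the edge cases $r\in\{n-1,n\}$ airtight: one must verify that the rational normal curve through the given points can be parametrized so as to produce a genuine point of $\Gr(2,n)^\circ$, with all $2\times 2$ minors nonzero matching the distinctness of the $n$ points, so that the classical existence statement really yields surjectivity of $\varphi_r(\Gr(2,n)^\circ)$ onto the ambient torus (which in turn forces $S$ to be the full torus). The main-range argument, by contrast, is essentially formal once the finiteness of $\ker\varphi_r$ is extracted from Lemma~\ref{lem:phiinjective}.
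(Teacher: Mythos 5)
Your proof is correct, and it takes a genuinely different route from the paper's. The paper argues via a valuative criterion: given a DVR point of the closure, it lifts through the Zariski-locally trivial $\SL_r$-quotient $U \to \Gr(r,n)^\circ$, interprets the special fiber as a configuration on a quasi-Veronese curve using \cite[Proposition 2.7]{CGMS18}, rules out reducible limits by a degree count forcing $n \le r+1$, and invokes Castelnuovo's lemma to produce an honest rational normal curve, moved into standard Veronese form by the group action. Your argument replaces all of this, in the range $2 \le r \le n-2$, by pure toric group theory: Lemma~\ref{lem:phiinjective} gives full rank of the lattice map underlying $\varphi_r$, so $\varphi_r$ is an isogeny onto the closed subtorus $S$, hence a finite and therefore closed morphism, which carries the closed subset $\Gr(2,n)^\circ$ to a closed subset --- and you correctly recognize that some such properness input is indispensable, since torus homomorphisms with positive-dimensional kernel need not map closed sets to closed sets, which is exactly why $r \in \{n-1,n\}$ require separate treatment. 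Your sketches there do go through, with two details to pin down: for $r=n$, surjectivity onto $\Bbbk^\times$ follows from scaling along the affine cone, since $\lambda \mapsto \lambda^{\binom{n}{2}}$ is surjective over the algebraically closed field $\Bbbk$; for $r=n-1$, the configuration attached to a point of $\Gr(n-1,n)^\circ$ is in linearly general position (every subset of the columns of size at most $n-1$ is independent), so Castelnuovo applies, but normalizing the curve by $g \in \GL_{n-1}$ and the columns by scalars $t_i$ recovers the given Pl\"ucker vector only up to the factor $\det(g)\prod_{i\in I}t_i$; this is absorbed into the image by extracting $(n-2)$-th roots of the $t_i$ (scaling the $i$-th column of the $2\times n$ matrix by $s_i$ scales the $i$-th Veronese column by $s_i^{n-2}$) and a $\binom{n-1}{2}$-th root of the determinant factor, again using algebraic closure. (In positive characteristic $\ker\varphi_r$ may be a non-reduced finite group scheme, but finiteness of the morphism is unaffected: the character-lattice inclusion has finite index, so the coordinate ring extension is integral.) As for what each approach buys: yours is shorter and, in the main range, avoids the machinery of \cite{CGMS18} entirely; the paper's proof, however, shows along the way that points of $\varphi_r(\Gr(2,n)^\circ)$ correspond exactly to configurations on rational normal curves, a byproduct explicitly reused in Proposition~\ref{prop:GelMac}, so with your proof that identification would still have to be established separately --- essentially by the same rational-normal-curve geometry your $r=n-1$ case already deploys.
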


\begin{proof}
Let $x\in \overline{\varphi_r(\Gr(2,n)^\circ)}\subseteq(\Bbbk^\times)^{\binom{n}{r}}$, and let $R$ be a DVR with field of fractions $K$ and residue field $\Bbbk$ such that we have a map $\mathrm{Spec}(R)\rightarrow \overline{\varphi_r(\Gr(2,n)^\circ)}$ with $\mathrm{Spec}(K)$ mapping to $\varphi_r(\Gr(2,n)^\circ)$ and $\mathrm{Spec}(\Bbbk)$ mapping to $x$.  Let $U\subseteq \mathbb{A}^{rn}$ be the open locus of matrices all of whose maximal minors are nonzero.  The $\SL_r$-quotient morphism $U \rightarrow \Gr(r,n)^\circ$ is a locally trivial bundle in the Zariski topology, so we can lift $\mathrm{Spec}(R)\rightarrow \overline{\varphi_r(\Gr(2,n)^\circ)}$ to a map $\mathrm{Spec}(R)\rightarrow U$; fix a choice of lift. This is a matrix over $R$ all of whose maximal minors are nonzero --- so in particular none of the columns of this matrix is the zero vector --- and whose restriction to $\mathrm{Spec}(K)$ is, up to the $\SL_r$-action, a matrix in the form shown in the right-hand side of \eqref{eqn:psi}.  

Because none of the columns of this matrix is zero, it descends to an $R$-point of the $(\Bbbk^\times)^n$-quotient $(\PP_R^{r-1})^n$.  The restriction of this latter $R$-point to $\mathrm{Spec}(K)$ is a configuration of $n$ points in $\PP^{r-1}_K$ that lie on a rational normal curve, because the map in \eqref{eqn:psi} simply applies the $(r-1)$-Veronese map to each column and the $\SL_r$-action preserves the property of the configuration lying on a rational normal curve.  Therefore, the induced $\Bbbk$-point is in the Zariski closure of the locus of $n$ points lying on a rational normal curve, and it is non-degenerate by the non-vanishing of maximal minors. So by \cite[Proposition 2.7]{CGMS18} this $\Bbbk$-point is a configuration of $n$ points on a quasi-Veronese curve (a non-degenerate flat limit of rational normal curves, see \cite[Definition 2.5]{CGMS18}), which we denote by $C \subseteq \PP^{r-1}$. We claim there is an actual rational normal curve $C' \subseteq \PP^{r-1}$ containing this $n$-point configuration. 

If $C$ is irreducible then it is a rational normal curve and we may set $C' = C$. Suppose not, i.e., $C$ is a reducible quasi-Veronese curve.  We can then write $C = C_{1} \cup C_{2}$ where, by \cite[Lemma 2.6]{CGMS18}, $C_{1}$ and $C_{2}$ are connected, possibly reducible, quasi-Veronese curves of positive degrees $d_{1}$ and $d_{2}$, respectively, with $d_{1} + d_{2} = r-1$. The same lemma shows that the projective linear subspace spanned by a degree $d_{i}$ quasi-Veronese curve is of dimension $d_{i}$. It follows that the number of points lying on $C_{i}$ is at most $d_{i}+1$, for $i=1,2$, since otherwise the points on $C_i$ would be linearly dependent and so any set of $r$ points containing these points would also be linearly dependent, contradicting the fact that all maximal minors of the corresponding matrix are nonzero.  Consequently, 
\[n \le d_{1}+d_{2}+2 = r+1.\] Thus we have at most $r+1$ points in $\PP^{r-1}$, and they are in general linear position by the nonzero maximal minors condition, so Castelnuovo's lemma (\cite[Theorem 1.18]{Har95}) implies the existence of a rational normal curve $C'$ through all $n$ points, as claimed. 

Any rational normal curve in $\mathbb{P}^{r-1}$ is in the $\GL_r$-orbit of the standard Veronese rational normal curve $\mathbb{P}^1\hookrightarrow\mathbb{P}^{r-1}$. So, up to acting on the lift $\mathrm{Spec}(R)\rightarrow U$ by $\SL_r$, we can assume that $C'$ is the standard Veronese rational normal curve. This implies that the corresponding limiting $\Bbbk$-point in $U$ is in the form shown in the right-hand side of \eqref{eqn:psi}, so its image $x$ under the $\SL_r$-quotient $U \rightarrow \Gr(r,n)^\circ$ is indeed in the image of $\varphi_r$. 
\end{proof}

\begin{remark}
\label{boccicoolswork}
In \cite[Theorem~3.2]{BC09} Bocci--Cools introduce a piecewise linear map \[\phi^{(r)}:\mathbb{R}^{n\choose2}\rightarrow\mathbb{R}^{n\choose r}\] that provides a factorization of the $r$-dissimilarity map, namely $d_r = \phi^{(r)} \circ d_2$.  On the other hand, as shown in Proposition \ref{prop:compo} our linear map $\Trop(\varphi_r)$ provides a factorization of our weighted $r$-dissimilarity map, namely $d_r^{wt} = \Trop(\varphi_r)\circ d_2$.  Since $\Trop(\varphi_r)$ is injective, we can choose a left inverse for it (such as the one explicitly constructed in the proof of Lemma \ref{lem:phiinjective}) and then composing this with $\phi^{(r)}$ yields a piecewise linear map $g_r\colon\mathbb{R}^{n\choose r}\rightarrow\mathbb{R}^{n\choose r}$ such that the following diagram commutes:
\begin{equation*}
\begin{tikzcd}
&\mathcal{T}_n\arrow{r}{d_2}\ar[drr, "d_r~" left]\ar[rr, bend left, "d_r^{wt}"]
&\mathbb{R}^{n\choose2} \arrow{r}{\Trop(\varphi_r)}\arrow{dr}{\phi^{(r)}}
&\mathbb{R}^{n\choose r} \arrow{d}{g_r}\\
&
&
&\mathbb{R}^{n\choose r}.
\end{tikzcd}
\end{equation*}
In particular, we obtain a factorization $d_{r} = g_r \circ d_{r}^{wt}$.  As we have seen, $d_r^{wt}(\mathcal{T}_n)$ is a tropical variety in $\mathbb{R}^{\binom{n}{r}}$ whereas $d_r(\mathcal{T}_n)$ is not.  Intuitively, the map $g_r$ tilts rays in the space of weighted dissimilarity vectors in such a way that certain collections of rays go from being linearly dependent to being linearly independent, and this is what destroys the balancing condition needed to be a tropical variety.
\end{remark}

\section{Tropical bases and a generalized tree-metric theorem}

Recall that $\varphi_r(\Gr(2,n)^\circ) \subseteq \Gr(r,n)^\circ$ is a closed subvariety (in the ambient torus $(\Bbbk^\times)^{\binom{n}{r}}$) whose tropicalization is the space of weighted $r$-dissimilarity vectors $d_r^{wt}(\mathcal{T}_n) \subseteq \mathbb{R}^{\binom{n}{r}}$.  In order to find tropical equations for the tropicalization of this subvariety --- and hence a characterization of weighted dissimilarity vectors --- we need to first find equations for the subvariety $\varphi_r(\Gr(2,n)^\circ)$ itself.

\subsection{Gelfand--MacPherson correspondence}

The proof of Lemma~\ref{lem:closed} shows that points of $\varphi_r(\Gr(2,n)^\circ)$ correspond to configurations of $n$ points in $\PP^{r-1}$ that lie on a rational normal curve.  This correspondence is in essence the Gelfand--MacPherson correspondence, which identifies generic $\GL_r$-orbits in $(\PP^{r-1})^n$ with generic $(\Bbbk^\times)^n$-orbits in $\Gr(r,n)$, and vice-versa (cf. \cite[\S2.2]{Kap93}).  In fact:

\begin{proposition}\label{prop:GelMac}
For $r \le n$, $\varphi_r(\Gr(2,n)^\circ)$ corresponds under Gelfand--MacPherson to the open locus in $(\PP^{r-1})^n$ of configurations of $n$ distinct points that lie on a rational normal curve.
\end{proposition}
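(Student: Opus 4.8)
The plan is to make the Gelfand--MacPherson dictionary explicit in this setting and then identify both sides of the claimed correspondence by a forward and a reverse inclusion. First I would recall that a point of $\Gr(r,n)^\circ$ is represented by an $r\times n$ matrix $A$ with all maximal minors nonzero, well-defined up to the left $\GL_r$-action; its $n$ columns are nonzero and any $r$ of them are linearly independent, so they determine a configuration of $n$ points in $\PP^{r-1}$ in general linear position. Under this dictionary the left $\GL_r$-action on $A$ becomes the $\GL_r$-action on $(\PP^{r-1})^n$, while the $(\Bbbk^\times)^n$-action on $\Gr(r,n)$ rescales the columns and hence acts trivially on the associated configuration. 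This is precisely the Gelfand--MacPherson correspondence between $(\Bbbk^\times)^n$-orbits in $\Gr(r,n)^\circ$ and $\GL_r$-orbits in the general-position locus of $(\PP^{r-1})^n$ (cf. \cite{Kap93}).

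Before comparing the two loci I would check that $\varphi_r(\Gr(2,n)^\circ)$ is $(\Bbbk^\times)^n$-invariant, so that it descends to a well-defined subset of the Gelfand--MacPherson quotient. This follows from the equivariance of $\varphi_r$: the defining formula $\varphi_r^*(x_I)=\prod_{i,j\in I,\,i<j}x_{ij}$ intertwines the $(\Bbbk^\times)^n$-action on the source with that on the target through the coordinatewise power map $t\mapsto t^{r-1}$, and since this power map is surjective over the algebraically closed field $\Bbbk$ the image is stable under the full target torus.

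For the forward inclusion I would use Proposition~\ref{prop:restrict}: a point of $\varphi_r(\Gr(2,n)^\circ)$ is represented by the columnwise $(r-1)$-Veronese image of a $2\times n$ matrix with nonzero maximal minors, so its columns are the images of $n$ points of $\PP^1$ under the Veronese embedding $\PP^1\hookrightarrow\PP^{r-1}$ and therefore lie on the standard rational normal curve. For the reverse inclusion I would start from a configuration of $n$ distinct points lying on some rational normal curve $C\subseteq\PP^{r-1}$, note that distinctness already forces general linear position (the relevant $r\times r$ minors are Vandermonde determinants), invoke the classical transitivity of $\GL_r$ on rational normal curves to move $C$ to the standard one, and then read off a $2\times n$ matrix $B$ from the $\PP^1$-parameters of the points; distinctness makes all its $2\times2$ minors nonzero, so $B\in\Gr(2,n)^\circ$ and $\varphi_r([B])$ recovers the given configuration.

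The main obstacle is conceptual rather than computational: one must be careful that the two descriptions match at the level of orbit spaces, i.e., that the $(\Bbbk^\times)^n$-invariance established above makes the forward and reverse constructions genuinely inverse on orbits. Much of the underlying geometry---that limits stay on rational normal curves and that any rational normal curve is projectively standard---has already been carried out in the proof of Lemma~\ref{lem:closed}, so the remaining work is primarily to repackage it inside the Gelfand--MacPherson framework.
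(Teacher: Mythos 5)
Your proof is correct and takes essentially the same route as the paper: the forward inclusion is the observation (via Proposition~\ref{prop:restrict}) that points of $\varphi_r(\Gr(2,n)^\circ)$ are columnwise Veronese images of $2\times n$ matrices with nonzero maximal minors (distinctness of the resulting points being immediate from the nonzero $2\times2$ minors plus injectivity of the Veronese map, a point worth stating explicitly), and the reverse inclusion uses the Vandermonde linear-independence of distinct points on a rational normal curve together with $\GL_r$-transitivity on such curves, exactly as in the paper's appeal to the proof of Lemma~\ref{lem:closed}. Your explicit verification that $\varphi_r(\Gr(2,n)^\circ)$ is $(\Bbbk^\times)^n$-invariant, via the equivariance $\varphi_r(t\cdot p)=t^{r-1}\cdot\varphi_r(p)$ and surjectivity of the coordinatewise power map over the algebraically closed field $\Bbbk$, is a sound addition that the paper leaves implicit.
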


\begin{proof}
The proof of Lemma~\ref{lem:closed} shows that each point of $\varphi_r(\Gr(2,n)^\circ)$ corresponds to a configuration of $n$ points on a rational normal curve, and these points must be distinct since otherwise two columns in the matrix of coordinates would be proportional and hence any maximal minor containing these columns would be zero, contradicting the fact that all maximal minors are nonzero.  Conversely, it is a classical fact (coming from the Vandermonde determinant) that distinct points on a rational normal curve are linearly independent, so any configuration of such points corresponds to a matrix all of whose maximal minors are nonzero, and as noted in the proof of Lemma~\ref{lem:closed} such a matrix yields a point of $\varphi_r(\Gr(2,n)^\circ)$.
\end{proof}

In particular, any $\SL_r$-invariant polynomial that vanishes on the locus in $(\PP^{r-1})^n$ of configurations lying on a rational normal curve corresponds to a $(\Bbbk^\times)^n$-invariant polynomial that vanishes on $\varphi_r(\Gr(2,n)^\circ)$.    In other words, to find the ideal defining $\varphi_r(\Gr(2,n)^\circ)$, a natural place to look is the ideal defining the Zariski closure in $(\PP^{r-1})^n$ of the locus of points lying on a rational normal curve.  This latter closed subvariety, and the ideal defining it, was the focus of the paper \cite{CGMS18}, where it is denoted $V_{r-1,n} \subseteq (\PP^{r-1})^n$ (since it parameterizes configurations on a quasi-Veronese curve).  

Two potential issues arise with this strategy: (1) generators for the ideal of $V_{r-1,n}$ are not fully known in general, and (2) not all the generators for this ideal are $\SL_r$-invariant \cite[Remark 4.24]{CGMS18}.  However, we will establish in this section that the generators that are known from \cite{CGMS18} (all of which are $\SL_r$-invariant) suffice to cut out the tropicalization of $\varphi_r(\Gr(2,n)^\circ)$.  We begin by reviewing these equations.

\subsection{Equations for points to lie on a rational normal curve}
\label{ssec:equationsVdn}

The closure $V_{r-1,n} \subseteq (\PP^{r-1})^n$ of the locus of $n$ points lying on a rational normal curve in $\PP^{r-1}$ is the whole space if $r=2$ or $r\ge n-2$. Thus, we will assume $3\leq r\leq n-3$ from now on. The first nontrivial example of  $V_{r-1,n} \subseteq (\PP^{r-1})^n$ is $V_{2, 6}$, which parametrizes six-tuples of points in $\mathbb{P}^2$ that lie on a conic. This is an irreducible hypersurface in $(\mathbb{P}^2)^6$ defined by the vanishing of the following $\SL_3$-invariant polynomial expressed as a quartic binomial in bracket notation (see \cite[Equation~(3.4.9)]{Stu08} and \cite[Remark~3.3]{CGMS18}):
\[
	\phi = |123||145||246||356| - |124||135||236||456|.
\]
The notation $|ijk|$ here denotes the determinant of the $3\times 3$ submatrix, of a $3\times 6$ matrix of coordinates on $(\PP^2)^6$, with columns $ijk$.  This bracket expression is not fully $S_{6}$-symmetric because brackets satisfy many non-trivial Pl\"ucker relations. Indeed, up to obvious sign changes there are 15 different presentations of $\phi$, as we next describe. 

Let $G$ be the graph with vertex set ${[6] \choose 3}$ where vertices $I$ and $J$ are connected if $|I \cap J| = 2$. A  straightforward combinatorial argument shows that $G$ has 15 subgraphs isomorphic to the $3$-dimensional cube, and these form a single orbit under the natural $S_6$-action.  A cube is a bipartite graph, so for each cube subgraph we can uniquely divide the vertex set into black and white subsets, which we label $B$ and $W$ respectively, where we adopt the convention that the smallest triplet in the lexicographic order is black.  For each vertex $I = \{i, j, k\}$ we have the associated bracket $m_{I} := |ijk|$, and for each cube $C$ in $G$ we may define a polynomial 
\[
	\phi_{C} := \prod_{I \in B}m_{I} - \prod_{J \in W}m_{J}.
\]

\begin{example}\label{example:cube}
The subgraph $C$ generated by 
	\[\{1,2,3\}, \{1, 2, 4\}, \{1, 3, 5\}, \{1, 4, 5\}, \{2, 3, 6\}, \{2, 4, 6\}, \{3, 5, 6\}, \{4, 5, 6\}\]
is a cube, and the corresponding black-white bipartition is
\[
	B := \{\{1, 2, 3\}, \{1, 4, 5\}, \{2, 4, 6\}, \{3, 5, 6\}\}, \quad 
	W := \{\{1, 2, 4\}, \{1, 3, 5\}, \{2, 3, 6\}, \{4, 5, 6\}\},
\]
so in this case $\phi_C$ coincides with the polynomial $\phi$ presented above.
\end{example}

\begin{lemma}
For each cube $C$, we have $V(\phi_C) = V(\phi)$ as subvarieties of $(\PP^2)^6$. 
\end{lemma}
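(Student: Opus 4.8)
The plan is to show that $\phi_C$ and $\phi$ cut out the same hypersurface by first proving the inclusion $V(\phi)\subseteq V(\phi_C)$ and then upgrading it to equality by a degree count. Since $V(\phi)=V_{2,6}$ is an irreducible hypersurface and $\phi$ is (up to scalar) its irreducible defining equation, once I know that $\phi_C$ vanishes on $V_{2,6}$ it will follow that $\phi\mid\phi_C$ in the coordinate ring. As $\phi_C$ is a difference of products of distinct brackets it is nonzero, and one checks that $\phi_C$ and $\phi$ share the multidegree $(2,2,2,2,2,2)$; hence the quotient $\phi_C/\phi$ is a nonzero constant $c$, giving $\phi_C=c\,\phi$ and therefore $V(\phi_C)=V(\phi)$.

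The core of the argument is thus to prove that $\phi_C$ vanishes on $V_{2,6}$. First I would reduce to the dense open locus of configurations lying on a \emph{smooth} conic, which is legitimate because $V_{2,6}$ is by definition the Zariski closure of this locus and $\phi_C$ is a polynomial. Since $\phi_C$ is $\SL_3$-invariant it transforms under $\GL_3$ by a character, so its vanishing locus is $\GL_3$-invariant and I may assume the conic is the standard Veronese conic, i.e. the $i$-th point is $(1:x_i:x_i^2)$. The key computation is then the Vandermonde factorization of each bracket: for $I=\{i<j<k\}$,
\[
m_{ijk}=\prod_{\{a<b\}\subseteq\{i,j,k\}}(x_b-x_a).
\]
Writing $[ab]:=x_b-x_a$, this yields $\prod_{I\in B}m_I=\prod_{\{a<b\}}[ab]^{\,n_B(\{a,b\})}$ and similarly for $W$, where $n_B(\rho)$ (resp.\ $n_W(\rho)$) counts the triplets in $B$ (resp.\ $W$) containing the pair $\rho$. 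Consequently $\phi_C=\prod_{I\in B}m_I-\prod_{J\in W}m_J$ vanishes on the smooth-conic locus as soon as $n_B(\rho)=n_W(\rho)$ for every pair $\rho\subseteq[6]$.

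It therefore remains to establish this combinatorial balancing. Structurally, at each vertex $T$ of a cube the three incident edges preserve the three distinct pairs of $T$, so for a fixed pair $\rho$ the cube-edges that preserve $\rho$ form a perfect matching on the set of cube-vertices containing $\rho$; as each such edge joins a black vertex to a white one, this forces $n_B(\rho)=n_W(\rho)$. Most cleanly, though, I would observe that the property ``$n_B(\rho)=n_W(\rho)$ for all $\rho$'' is invariant under the $S_6$-action --- which permutes the pairs and can only swap the roles of $B$ and $W$, leaving the equality intact --- and that the $15$ cubes form a single $S_6$-orbit; hence it suffices to verify it for the cube of Example~\ref{example:cube}, where it is immediate from the explicit lists of $B$ and $W$ (and the same reasoning gives the single-index balancing that pins down the multidegree $(2,\dots,2)$). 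The main obstacle is keeping the edge/pair bookkeeping of the cube straight so that the matching structure is correctly identified; once the balancing is in hand, the reduction via the Veronese factorization and the final divisibility-plus-degree argument are both routine, the latter resting only on the irreducibility of $\phi$.
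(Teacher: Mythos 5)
Your proposal is correct, but it takes a genuinely different route from the paper's. The paper dispatches the lemma by pure symmetry: the hypersurface $V_{2,6}=V(\phi)$ is $S_6$-invariant for geometric reasons, the $S_6$-action is transitive on the $15$ cubes and compatible with the induced action on bracket polynomials, so $V(\phi_{C'})=V(\sigma\cdot\phi_C)=\sigma\left(V(\phi_C)\right)=V_{2,6}$ --- no computation at all. You instead verify directly that each $\phi_C$ vanishes on $V_{2,6}$: restrict to the dense locus of configurations on a smooth conic, normalize to the Veronese conic using the fact that $\phi_C$ is a $\det$-semiinvariant of $\GL_3$, factor each bracket as a Vandermonde product, and reduce to the pair-balancing condition $n_B(\rho)=n_W(\rho)$, which you establish either via the perfect-matching structure of the cube edges preserving $\rho$ or, more safely, by checking the cube of Example~\ref{example:cube} and transporting along the $S_6$-orbit (so you still lean on the paper's stated orbit fact, just for combinatorics rather than geometry); you then upgrade the inclusion $V(\phi)\subseteq V(\phi_C)$ to equality via $\phi\mid\phi_C$ and the multidegree count. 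Your route costs more bookkeeping and one extra input --- you need $\phi$ itself to be irreducible (equivalently reduced), which the paper's symmetry argument never uses but which is available from the cited sources \cite{Stu08,CGMS18} --- and in exchange it buys a strictly stronger conclusion: $\phi_C=c\,\phi$ as polynomials, which substantiates the paper's surrounding remark that the $15$ cubes give ``$15$ different presentations of $\phi$'' up to sign, something the paper's own proof of this lemma does not establish. It is also worth noting that your two balancing facts (each element of $I$ lies in exactly two black and two white vertices, and each pair lies in exactly one of each when it occurs at all) are precisely the observations (1) and (2) that the paper later invokes without proof in the proof of Proposition~\ref{prop:troplin}, so your matching argument doubles as a justification for that step.
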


\begin{proof}
As noted above, $\phi = \phi_C$ where $C$ is the cube in Example \ref{example:cube}, so it suffices to show that if $C'$ is another cube then $V(\phi_{C}) = V(\phi_{C'})$.  By geometric considerations, the irreducible hypersurface $V_{2,6} = V(\phi_C)$ is invariant under the natural $S_6$-action on $(\PP^2)^6$.  This implies that any $S_6$-permutation of $\phi_C$ must be a polynomial whose vanishing locus is also $V_{2,6}$.  The transitive $S_{6}$-action on the set of cubes is compatible with the action on bracket polynomials induced from the permutation action on $(\PP^{2})^{6}$.  Therefore, for any cube $C'$ there exists a permutation $\sigma\in S_6$ for which $\sigma \cdot C = C'$ and \[V(\phi_C) = V(\sigma \cdot \phi_C) = V(\phi_{\sigma \cdot C}) = V(\phi_{C'}),\] as desired.
\end{proof}

\begin{remark}
Even though all 15 polynomials $\phi_C$ define the hypersurface $V_{2,6}$ (and so this discussion of cubes and bipartitions did not arise in \cite{CGMS18}), when we turn attention to tropicalization later in this section we will need the extra flexibility provided by the choice of cube $C$.
\end{remark}

For $n > 6$, $V_{2,n}$ is defined scheme-theoretically by the $\binom{n}{6}$ polynomials obtained by pulling $\phi$ back along the projection maps $(\PP^2)^n \rightarrow (\PP^2)^6$ \cite[Theorem 3.6]{CGMS18}.

For $r > 3$ things get trickier; the polynomials found in \cite{CGMS18} were obtained as follows.   The idea is to take the polynomial for $V_{2,6}$, pull it back to $(\PP^2)^{r+3}$, apply the Gale transformation which, up to a constant, in bracket form is simply taking the complement of each index set (see \cite[Proposition 4.5]{CGMS18}) to get a polynomial on $(\PP^{r-1})^{r+3}$, then pull this back to $(\PP^{r-1})^{n}$.  More formally:
\begin{enumerate}
\item Choose $S \in {[n] \choose r+3}$, $T \in {[r+3] \choose 6}$, and a cube $C$ in ${[6] \choose 3}$.
\item Take the pull-back $\pi_{T}^{*}(\phi_{C})$ along the projection $\pi_T : (\PP^2)^{r+3} \rightarrow (\PP^2)^6$.
\item Take the Gale transform $\widehat{\pi_{T}^{*}(\phi_{C})}$.
\item Take the pull-back $\pi_{S}^{*}(\widehat{\pi_{T}^{*}(\phi_{C})})$ along the projection $\pi_S : (\PP^{r-1})^{n} \rightarrow (\PP^{r-1})^{r+3}$. 
\end{enumerate}

In slightly different notation, by using \cite[Proposition 4.1 and Remark 4.2]{CS19} we can rewrite the resulting polynomials explicitly as follows. For each 
\[
	I=\{i_1<\ldots<i_{6}\} \in \binom{[n]}{6}\text { and } J\in\binom{[n]\setminus I}{r-3},
\] 
let $C$ be a cube in $I$ and let $B,W$ be the corresponding bipartition.  For instance, the choice of cube in Example \ref{example:cube} yields
\begin{eqnarray*}
	B = \{\{i_1, i_2, i_3\}, \{i_1, i_4, i_5\}, \{i_2, i_4, i_6\}, \{i_3, i_5, i_6\}\}, \\
	W = \{\{i_1, i_2, i_4\}, \{i_1, i_3, i_5\}, \{i_2, i_3, i_6\}, \{i_4, i_5, i_6\}\}.
\end{eqnarray*}
Then let 
\[
	\psi_{C,I,J} := \prod_{K \in B}m_{J\sqcup K} - \prod_{K \in W}m_{J\sqcup K}.
\]
Each $\psi_{C,I,J}$ vanishes on $V_{r-1,n}$ by \cite[Lemma 4.17]{CGMS18}.

\subsection{Tropical basis}\label{ssec:tropicalbasis}

Since these $\SL_r$-invariant polynomials $\psi_{C,I,J}$ are expressed in bracket form (i.e., they are written as polynomials in the maximal minors) they can immediately be interpreted as polynomial functions on the Grassmannian $\Gr(r, n)$; this is done simply by viewing each minor as the corresponding Pl\"ucker coordinate function.  These are quartic binomials on the Grassmannian, and the choice of cube $C$ corresponds to all 15 possible ways of lifting this to a quartic binomial on the ambient $\PP^{\binom{n}{r}-1}$.   

\begin{definition}\label{def:Srn}
Let $\cS_{r-1, n}$ be the set of bracket binomials $\psi_{C,I,J}$ from \S\ref{ssec:equationsVdn}, and let $\mathcal{J}_{r,n} \subseteq \Bbbk[x_{I}^\pm]_{I\in\binom{[n]}{r}}$ be the ideal generated by $\cS_{r-1, n}$ and the Pl\"ucker relations for $\Gr(r,n)$.
\end{definition}

Note: If $r = 2$ or $r = n-2$, then ${[n] \setminus I \choose r -3} = \emptyset$ for any $I \in {[n] \choose 6}$, so it is safe to extend the preceding definition by setting $\cS_{r-1, n} = \emptyset$ in these cases.

\begin{proposition}\label{prop:ideallocus}
The set-theoretic vanishing locus in $(\Bbbk^\times)^{\binom{n}{r}}$ of the ideal $\mathcal{J}_{r,n}$ is $\varphi_r(\Gr(2,n)^\circ)$.
\end{proposition}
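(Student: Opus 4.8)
The plan is to prove the two inclusions separately after translating everything through the Gelfand--MacPherson correspondence of Proposition~\ref{prop:GelMac}. A point $p\in(\Bbbk^\times)^{\binom{n}{r}}$ lies in the set-theoretic vanishing locus of $\mathcal{J}_{r,n}$ exactly when it satisfies the Pl\"ucker relations---so that $p\in\Gr(r,n)^\circ$, since all its coordinates are nonzero---and in addition every $\psi_{C,I,J}$ vanishes at $p$. Under Gelfand--MacPherson such a $p$ corresponds to a configuration $\mathbf{p}$ of $n$ points in $\PP^{r-1}$ in \emph{general linear position}, meaning every $r$ of the representing vectors are linearly independent (the corresponding maximal minor, i.e.\ Pl\"ucker coordinate, being nonzero). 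Since each bracket occurring in $\psi_{C,I,J}$ is a maximal minor, i.e.\ a Pl\"ucker coordinate, the vanishing of $\psi_{C,I,J}$ at $p$ is equivalent to its vanishing on $\mathbf{p}$. Thus the proposition is equivalent to the set-theoretic statement that, on the open locus $U\subseteq(\PP^{r-1})^n$ of configurations in general linear position, the common zero locus of all the $\psi_{C,I,J}$ equals $V_{r-1,n}\cap U$. The cases $r=2$ and $r\ge n-2$ are immediate: there $\cS_{r-1,n}=\emptyset$, so $\mathcal{J}_{r,n}$ consists only of Pl\"ucker relations, and every general-position configuration of at most $r+2$ points in $\PP^{r-1}$ lies on a rational normal curve by Castelnuovo's lemma (\cite[Theorem 1.18]{Har95}); so I assume $3\le r\le n-3$.

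The inclusion $\varphi_r(\Gr(2,n)^\circ)\subseteq V(\mathcal{J}_{r,n})$ is the easy direction. Points of $\varphi_r(\Gr(2,n)^\circ)$ lie in $\Gr(r,n)^\circ$ and hence satisfy the Pl\"ucker relations, and by Proposition~\ref{prop:GelMac} they correspond to configurations lying on a rational normal curve, which belong to $V_{r-1,n}$; since every $\psi_{C,I,J}$ vanishes on $V_{r-1,n}$ by \cite[Lemma 4.17]{CGMS18}, it vanishes at such a $p$. The content is therefore the reverse inclusion: I must show that a configuration $\mathbf{p}\in U$ on which all the $\psi_{C,I,J}$ vanish actually lies on a rational normal curve, so that Proposition~\ref{prop:GelMac} places $p$ in $\varphi_r(\Gr(2,n)^\circ)$.

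For this I would exploit the construction of the $\psi_{C,I,J}$ as Gale transforms of the conic equation pulled back from size-six subsets. Fix $S=I\sqcup J\in\binom{[n]}{r+3}$ and consider the sub-configuration $\mathbf{p}_S$ of the $r+3$ points indexed by $S$; being in general linear position it is non-degenerate, so it admits a Gale dual $\widehat{\mathbf{p}_S}$ of $r+3$ points in $\PP^{2}$, again in general position, and the Gale transform sends each bracket $m_{J\sqcup K}$ of $\mathbf{p}_S$ to the complementary bracket of $\widehat{\mathbf{p}_S}$ indexed by $S\setminus(J\sqcup K)=I\setminus K$. Under this correspondence the binomials $\psi_{C,I,J}$ with $I\sqcup J=S$ become, up to a constant, the equations $\phi_C$ evaluated on the six Gale-dual points indexed by $I$, ranging over all six-element subsets $I\subseteq S$. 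By \cite[Theorem 3.6]{CGMS18} these pullbacks cut out set-theoretically the locus $V_{2,r+3}$ of $r+3$ points lying on a common conic, so the vanishing of all $\psi_{C,I,J}$ with $I\sqcup J=S$ forces $\widehat{\mathbf{p}_S}$ onto a conic. By the classical association of point sets---a configuration lies on a rational normal curve if and only if its Gale dual does---this means precisely that $\mathbf{p}_S$ lies on a rational normal curve in $\PP^{r-1}$.

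It then remains to pass from the size-$(r+3)$ subsets to the full configuration: every $(r+3)$-element subset of $\mathbf{p}$ lies on a rational normal curve, and I must conclude that all $n$ points lie on a \emph{single} such curve. This local-to-global step is the reduction underlying step (4) of the construction, namely that $V_{r-1,n}$ is the intersection of the pullbacks $\pi_S^{-1}(V_{r-1,r+3})$ over all $S\in\binom{[n]}{r+3}$; I would cite this from \cite{CGMS18}, or argue directly from Castelnuovo's lemma, which furnishes a \emph{unique} rational normal curve through any $r+3$ points in general linear position, so that the curves through overlapping $(r+3)$-subsets must coincide and glue into one curve through all of $\mathbf{p}$. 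The main obstacle is exactly this set-theoretic sufficiency on $U$: although the full ideal of $V_{r-1,n}$ is unknown and not all its generators are $\SL_r$-invariant, I must verify that the \emph{known}, $\SL_r$-invariant binomials $\psi_{C,I,J}$ already suffice to cut out $V_{r-1,n}$ on the general-position locus, and the delicate point is the bookkeeping of Gale duality together with the general-position hypothesis, which must be tracked to guarantee that every sub-configuration and every Gale dual used along the way remains in general position.
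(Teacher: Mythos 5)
Your proposal is correct and follows essentially the same route as the paper's proof: the easy inclusion is identical, and for the hard direction the paper likewise passes to $(r+3)$-point subconfigurations, uses the Gale dual together with \cite[Theorem 3.6]{CGMS18} to land on a (necessarily smooth) conic, invokes Goppa's theorem \cite[Corollary 3.2]{EP00} to put the subconfiguration on a rational normal curve, and then glues via Castelnuovo exactly as in your one-point-swap argument. The only slips are cosmetic: Castelnuovo's lemma \cite[Theorem 1.18]{Har95} gives uniqueness through $r+2=(r-1)+3$ general points, which is what the overlap of two $(r+3)$-subsets sharing $r+2$ points actually requires (not $r+3$ as you wrote), and the smoothness of the conic---needed for the ``classical association'' step---should be stated explicitly as a consequence of the general position of the Gale dual.
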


\begin{proof}
The result is trivial for $r=2$, so let $r \ge 3$.  First, we shall establish the set-theoretic containment $\varphi_r(\Gr(2,n)^\circ)\subseteq V(\mathcal{J}_{r,n})$.  By definition the left-hand side is contained in $\Gr(r,n)^\circ$, so all the Pl\"ucker relations vanish on it.  On the other hand, since each $\psi_{C,I,J}$ vanishes on $V_{r-1,n}$, when viewed as a Grassmannian polynomial it vanishes on $\varphi_r(\Gr(2,n)^\circ)$ by Proposition \ref{prop:GelMac}.  So it suffices to establish the reverse set-theoretic containment.

Let $\mathbf{p}\in V(\mathcal{J}_{r,n})$, and let $M(\mathbf{p})$ be any matrix (necessarily with nonzero maximal minors) in the corresponding $\GL_r$-orbit.  Let $\mathbf{p}'$ be any collection of $r+3$ columns in $M(\mathbf{p})$, viewed as a configuration of $r+3$ points in $\PP^{r-1}$.  Due to the nonzero maximal minors, $\mathbf{p}'$ is in general linear position, so we can choose a Gale dual configuration $\mathbf{q}'\in(\mathbb{P}^2)^{r+3}$ and it too is in general linear position by \cite[Proposition 4.5]{CGMS18}.  Now $\psi_{C,I,J}(\mathbf{p}')=0$ for all $I,J$ involving the labels of the points in $\mathbf{p}'$, so by \cite[Theorem~3.6]{CGMS18} $\mathbf{q}'$ must lie on a conic.  This conic must be smooth, since $\mathbf{q}'$ is in general linear position.   It now follows from a classical result of Goppa (see \cite[Corollary 3.2]{EP00}) that the configuration $\mathbf{p}'$ also lies on a rational normal curve, call it $X$.  Now replace a single point of $\mathbf{p}'$ with one of the other columns of $M(\mathbf{p})$ and apply the same argument to deduce that this new configuration lies on a rational normal curve $X'$.  But these two rational normal curves have $r+2$ points in common, so by Castelnuovo's lemma we have $X=X'$.  Repeating this for the remaining columns shows that the full configuration given by $M(\mathbf{p})$ lies on a rational normal curve, and hence $\mathbf{p}\in \varphi_r(\Gr(2,n)^\circ)$ as desired.
\end{proof}

\begin{remark}
We expect that $V(\mathcal{J}_{r,n}) = \varphi_r(\Gr(2,n)^\circ)$ as subschemes of $(\Bbbk^\times)^{\binom{n}{r}}$, not just subvarieties, but we have not been able to establish this.
\end{remark}

By viewing $\psi_{C,I,J}$ as a polynomial on $\mathbb{A}^{\binom{n}{r}}$, we can tropicalize it to obtain a tropical polynomial $\Trop(\psi_{C,I,J})$ on $\mathbb{R}^{\binom{n}{r}}$.  Moreover, since $\psi_{C,I,J}$ is a binomial, the corresponding tropical hypersurface is a classical hyperplane.  Concretely, for coordinates $x_S$ on $\mathbb{R}^{\binom{n}{r}}$, where $S\in\binom{[n]}{r}$, the tropical hypersurface $V^{\trop}(\Trop(\psi_{C,I,J}))$ is given by
\begin{equation}\label{eq:linfcns}
\sum_{K \in B}x_{J\sqcup K} - \sum_{K \in W}x_{J\sqcup K} = 0.
\end{equation}

We first show that the above classical hyperplanes cut out the image of the injective classically-linear map $\Trop(\varphi_r) : \mathbb{R}^{\binom{n}{2}} \hookrightarrow \mathbb{R}^{\binom{n}{r}}$ (recall Lemma \ref{lem:phiinjective}):

\begin{proposition}\label{prop:troplin}
For $2 \le r \le n-2$, we have
\[
	\bigcap_{\psi_{C,I,J}\in \cS_{r-1, n}} V^{\trop}(\Trop(\psi_{C,I,J})) = \Trop(\varphi_r)\left(\mathbb{R}^{\binom{n}{2}}\right).
\]
\end{proposition}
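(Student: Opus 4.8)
\textbf{The plan is to} prove the two set-theoretic inclusions separately, exploiting the fact that each tropical hyperplane in \eqref{eq:linfcns} is genuinely linear. Let me write $L := \bigcap_{\psi_{C,I,J}} V^{\trop}(\Trop(\psi_{C,I,J}))$ for the left-hand side; this is a classical linear subspace of $\mathbb{R}^{\binom{n}{r}}$, being the common zero locus of the linear forms $\sum_{K\in B}x_{J\sqcup K} - \sum_{K\in W}x_{J\sqcup K}$. The right-hand side $\Trop(\varphi_r)(\mathbb{R}^{\binom{n}{2}})$ is also a linear subspace, of dimension $\binom{n}{2}$ by injectivity of $\Trop(\varphi_r)$ (Lemma~\ref{lem:phiinjective}). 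So the statement reduces to an equality of two linear subspaces.

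\textbf{First I would} establish the easy inclusion $\Trop(\varphi_r)(\mathbb{R}^{\binom{n}{2}}) \subseteq L$. By Proposition~\ref{prop:tropphi}, a point in the image has $S$-coordinate $x_S = \sum_{i<j,\, i,j\in S} t_{ij}$ for some vector $(t_{ij}) \in \mathbb{R}^{\binom{n}{2}}$. Plugging this into the linear form of a single equation \eqref{eq:linfcns} and using the cube/bipartition combinatorics, each $t_{ij}$ should appear the same number of times in the $B$-sum as in the $W$-sum, so the difference vanishes. This is precisely the tropical/linear shadow of the fact that the binomial $\psi_{C,I,J}$ has matching exponent vectors under $\varphi_r^*$, i.e.\ that $\psi_{C,I,J}$ pulls back to zero — a restatement that $\psi_{C,I,J}$ vanishes on the image variety, now at the level of tropicalizations. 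I expect this inclusion to follow by a direct exponent-counting argument (each pair $\{i,j\}$ lies in the same number of black triplets as white triplets of a cube, a basic parity feature of the bipartite cube graph).

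\textbf{The main obstacle} will be the reverse inclusion $L \subseteq \Trop(\varphi_r)(\mathbb{R}^{\binom{n}{2}})$, and here the natural route is a dimension count together with the first inclusion: since $\Trop(\varphi_r)(\mathbb{R}^{\binom{n}{2}}) \subseteq L$ and the former has dimension $\binom{n}{2}$, it suffices to show $\dim L \le \binom{n}{2}$, equivalently that the linear forms in \eqref{eq:linfcns}, ranging over all admissible $(C,I,J)$, have rank at least $\binom{n}{r} - \binom{n}{2}$. The cleanest way to certify this is to produce an explicit left inverse direction: I would use the matrix $M^{+}$ from the proof of Lemma~\ref{lem:phiinjective}, which provides a projection $\mathbb{R}^{\binom{n}{r}} \to \mathbb{R}^{\binom{n}{2}}$ splitting $\Trop(\varphi_r)$. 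The goal becomes showing that any $w \in L$ is determined by its image under $M^{+}$, i.e.\ that $w = \Trop(\varphi_r)(M^{+}w)$; since $\Trop(\varphi_r)(\mathbb{R}^{\binom{n}{2}}) \subseteq L$ and $M^{+}\Trop(\varphi_r) = \mathrm{Id}$, the vector $u := w - \Trop(\varphi_r)(M^{+}w)$ lies in $L$ and in $\ker M^{+}$, so it would suffice to show $L \cap \ker M^{+} = 0$. I expect this to reduce to a combinatorial linear-algebra statement about how the cube relations span the orthogonal complement of the image, which should follow by an inductive reduction on $r$ (peeling off an index in $J$) bottoming out at the base case $r=3$, $n=6$ where a single cube relation on $\binom{6}{3}=20$ coordinates cuts the hyperplane down to the expected codimension.

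\textbf{Alternatively}, if the $\ker M^{+}$ argument proves delicate, I would instead argue the reverse inclusion directly by characterizing which vectors satisfy all the relations \eqref{eq:linfcns}: fixing $w \in L$, I would reconstruct a candidate preimage $(t_{ij})$ via $t = M^{+}w$ and then verify $\Trop(\varphi_r)(t) = w$ coordinate-by-coordinate using the relations to propagate the equality from an initial set of coordinates (those indexed by sets $S$ containing a fixed $J$) to all of $\binom{[n]}{r}$. The relations \eqref{eq:linfcns} are exactly the local compatibility conditions that make this propagation consistent, so the crux is checking that the cube relations for varying $(C,I,J)$ suffice to connect every coordinate $x_S$ back to the reconstructed values — a connectivity argument on the graph whose vertices are $r$-subsets and whose edges are imposed by the relations. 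I anticipate the bipartite-cube structure guarantees this connectivity for all $3 \le r \le n-2$.
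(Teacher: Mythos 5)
Your overall architecture matches the paper's: both reduce the statement to an equality of classical linear subspaces, prove the easy inclusion $\Trop(\varphi_r)\left(\mathbb{R}^{\binom{n}{2}}\right) \subseteq L$ by the parity count that each pair of elements of $I$ lies in equally many black and white vertices of the cube (this is exactly the paper's computation that $NM=0$, using that each element of $I$ lies in two $B_j$'s and two $W_j$'s, and each pair in at most one of each), and then aim to finish with a dimension count showing the linear forms \eqref{eq:linfcns} have rank at least $\binom{n}{r}-\binom{n}{2}$. Your reformulation via $M^{+}$ is legitimate: since $M^{+}M=\Id$ and $\im M \subseteq L$, one has $L = \im M \oplus (L\cap\ker M^{+})$, so it suffices to show $L\cap\ker M^{+}=0$.

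But that is precisely where the proposal stops being a proof: $L\cap\ker M^{+}=0$ is just a restatement of the needed rank bound, and you offer only an expectation that it follows ``by an inductive reduction on $r$.'' Moreover your proposed base case is numerically wrong: for $r=3$, $n=6$ the image of $\Trop(\varphi_3)$ has dimension $\binom{6}{2}=15$ inside $\mathbb{R}^{20}$, i.e.\ codimension $5$, so a single cube relation (one hyperplane) cannot cut down to the expected codimension; one must show that the $15$ relations coming from the $15$ cubes on $[6]$ have rank $5$, and that requires an argument. The paper supplies exactly this missing content: ordering coordinates lexicographically, for each column $I$ (outside an explicitly counted exceptional family) it builds a specific cube relation whose lexicographically first nonzero entry sits in column $I$, by taking $K=\{a<b<c\}$ the three smallest elements of $I$ and choosing $K'=\{d<e<f\}$ disjoint from $I$ with $a<d$, $b<e$, $c<f$; rows with distinct leading columns are automatically independent, and an induction on $n$ together with the identity $\sum_{i=1}^{m-s+1} i\binom{m-1-i}{s-2} = \binom{m}{s}$ shows there are exactly $\binom{n}{r}-\binom{n}{2}$ such rows. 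Neither your kernel-intersection route nor your alternative ``propagation/connectivity'' sketch contains a substitute for this construction, so the hard inclusion is a genuine gap as written. (Also handle $r=2$ and $r=n-2$ separately: there $\cS_{r-1,n}=\emptyset$, so $L=\mathbb{R}^{\binom{n}{r}}$ and equality holds only because $\Trop(\varphi_r)$ is bijective in those cases by Lemma~\ref{lem:phiinjective}; in particular your connectivity claim ``for all $3\le r\le n-2$'' cannot be right at $r=n-2$, where there are no relations to propagate along.)
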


\begin{proof}
If $r = 2$ or $n - 2$, then $\cS_{r-1, n} = \emptyset$ so the left-hand side is $\RR^{{n \choose r}}$, but so is the right-hand side due to the bijectivity of $\Trop(\varphi_r)$ in these cases established in Lemma \ref{lem:phiinjective}. So assume that $3 \le r \le n-3$. 

Let $N$ be the $\left({n \choose 6}\cdot {n-6 \choose r-3}\cdot 15\right) \times {n \choose r}$ matrix whose rows encode the coefficients of the linear forms in \eqref{eq:linfcns}, so that $\ker N$ is the intersection on the left-side of the proposition statement.  Let $M$ be the matrix associated to $\Trop(\varphi_r)$, which was described explicitly in the proof of Lemma~\ref{lem:phiinjective}.  Our task is thus to prove $\ker N = \im M$.  

We shall first show that $NM = 0$, i.e., $\im M \subseteq \ker N$.  From the definition of $M$, this is equivalent to the following: for each $\psi_{C,I,J}$ and each $A\in\binom{[n]}{2}$, the number of terms $x_S$ in \eqref{eq:linfcns} with a positive coefficient for which $A \subseteq S$ equals the number of such terms with a negative coefficient.  If we write the bipartition corresponding to the cube $C$ as \[B = \{B_1,B_2,B_3,B_4\},~W = \{W_1,W_2,W_3,W_4\},\] then the positive terms of $\psi_{C,I,J}$ are $x_{J\sqcup B_j}$ for $j=1,2,3,4$ and the negative terms are $x_{J\sqcup W_j}$ for $j=1,2,3,4$.  So we need to show that the number of $j$ for which $A \subseteq J\sqcup B_j$ equals the number of $j$ for which $A \subseteq J \sqcup W_j$.  This follows immediately from the observations that (1) each element of $I$ occurs in exactly two $B_j$ and two $W_j$, and (2) if a pair of elements of $I$ occurs in a $B_j$ or a $W_j$ then it occurs in exactly one $B_j$ and one $W_j$.  

Having shown that $\im M \subseteq \ker N$, since $\rk M = \binom{n}{2}$ (Lemma~\ref{lem:phiinjective}) it now suffices to show that $\dim(\ker N) \le \binom{n}{2}$, or equivalently, $\rk N \ge {n \choose r} - {n \choose 2}$.  To do this, we will find ${n \choose r} - {n \choose 2}$ linearly independent rows in $N$.  Order the columns of $N$ according to the lexicographic order on $\binom{[n]}{r}$.  We will first find a collection of rows where the left-most nonzero entries all occur in distinct columns, since such rows are necessarily linearly independent, and then we will show that this collection has ${n \choose r} - {n \choose 2}$ elements in it.

Consider a column $I \in \binom{[n]}{r}$.  Let $K=\{a<b<c\}\subseteq I$ be the subset comprising the $3$ smallest elements and let $K^{c} = I \setminus K$ be the remaining $r-3$ elements. Choose another set of $3$ elements $K'=\{d<e<f\}$ in $[n] \setminus I$ satisfying $a < d$, $b <e$, and $c < f$. Consider the following cube $C$ on $K\sqcup K'$:
\[
\{a,b,c\},~\{a,b,f\},~\{a,e,c\},~\{a,e,f\},~\{b,d,c\},~\{b,d,f\},~\{c,d,e\},~\{d,e,f\}.
\]
Notice that $K$ is the smallest vertex of the cube in lexicographic order. Let $B \sqcup W$ be the usual bipartition of $C$, so in particular $K\in B$. Then the vector $(a_{J})$ where 
\[
	a_{J} = \begin{cases}1 & \mbox{if}~J = T \sqcup K^{c},\; T \in B\\
	-1 & \mbox{if}~J = T \sqcup K^{c},\; T \in W\\
	0 &\mbox{otherwise},\end{cases}
\]
is a row of $N$ such that the first nonzero entry is $a_I=1$. Let $p_{n, r}$ be the number of columns $I$ for which we can construct a row $(a_J)$ by the above description. We will show that $p_{n, r} = {n \choose r} - {n \choose 2}$ by using induction on $n$. It is obvious that $p_{r+2, r} = 0$.

Now we count the possibilities. First of all, in $K=\{a<b<c\}$ we have that $a = 1$ or $a > 1$. The number of cases with $a>1$ is precisely $p_{n-1, r}$, which by inductive assumption is equal to ${n-1 \choose r} - {n-1 \choose 2}$. Thus we only have to count the cases with $a = 1$. 

The possible range of $c$ is $3 \le c \le n-r+2$, as we need at least $r-2$ elements in $[n]$ larger than $c$, namely, $K^{c} \cup \{f\}$. When $c < n - r + 2$, then $b$ can be any number between $2$ and $c -1$, so the number of possibilities for $b$ is $c-2$. In this case, the number of ways to choose $K^{c}$ is ${n-c \choose r-3}$. When $c = n - r + 2$, then $b$ cannot be $c-1$, because we need two elements larger than $c$ (for $e$ and $f$) to make a cube where the smallest term is $K$, but in $[n] \setminus K$, there is only one element larger than $b$. So for $b$ we have $c-3=n-r-1$ possibilities and $\binom{n-c}{r-3}=\binom{r-2}{r-3}$ possibilities for $K^c$.

In summary, the number of ways to make such a construction is
\[
	\sum_{c = 3}^{n-r+1}(c-2){n-c \choose r-3} + (n-r-1)\binom{r-2}{r-3} = 
\]
\[
	\sum_{c = 3}^{n-r+2}(c-2){n-c \choose r-3} - {r-2 \choose r-3} = \sum_{i=1}^{n-r}i{n - 2- i \choose r-3} - (r-2).
\]
Thus we obtain a recursive formula 
\[
	p_{n, r} = \sum_{i=1}^{n-r}i{n-2-i \choose r-3} - (r-2) + p_{n-1, r}.
\]
From the inductive assumption and the lemma below, we obtain that $p_{n, r} = {n \choose r} - {n \choose 2}$.
\end{proof}

\begin{lemma}
\begin{equation*}
	\sum_{i=1}^{n-r}i{n-2-i \choose r-3} - (r-2) = \left({n \choose r} - {n \choose 2}\right) - \left({n-1 \choose r} - {n-1 \choose 2}\right).
\end{equation*}
\end{lemma}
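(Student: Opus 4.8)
The plan is to reduce the claimed identity to a single standard binomial convolution. First I would simplify the right-hand side: applying Pascal's rule twice gives $\binom{n}{r}-\binom{n-1}{r}=\binom{n-1}{r-1}$ and $\binom{n}{2}-\binom{n-1}{2}=n-1$, so the entire right-hand side collapses to $\binom{n-1}{r-1}-(n-1)$. Moving the term $-(r-2)$ across the equation, the lemma becomes equivalent to the cleaner statement
\[
\sum_{i=1}^{n-r} i\binom{n-2-i}{r-3} = \binom{n-1}{r-1} - (n-r+1).
\]

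The key observation is to recognize the partial sum as a truncated Vandermonde-type convolution. Writing $i=\binom{i}{1}$, I would invoke the standard convolution identity $\sum_{k=0}^{m}\binom{k}{a}\binom{m-k}{b}=\binom{m+1}{a+b+1}$ with $a=1$, $b=r-3$, and $m=n-2$, which yields the full-range evaluation
\[
\sum_{i=0}^{n-2} i\binom{n-2-i}{r-3} = \binom{n-1}{r-1}.
\]
It then remains to relate this full sum to the partial sum $\sum_{i=1}^{n-r}$ appearing in the reduced statement.

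For this final bookkeeping step I would analyze which boundary terms are dropped when restricting the index range. The summand vanishes at $i=0$, and the factor $\binom{n-2-i}{r-3}$ vanishes as soon as $n-2-i<r-3$, i.e.\ for all $i\ge n-r+2$. Hence the only index in the full range $0\le i\le n-2$ lying outside $\{1,\dots,n-r\}$ that contributes is $i=n-r+1$, whose term equals $(n-r+1)\binom{r-3}{r-3}=n-r+1$. Subtracting this single contribution from the convolution evaluation produces exactly the reduced statement, and hence the lemma.

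The argument is essentially a reduction to a known identity together with a careful truncation count, so I expect no serious obstacle; the only delicate point is the boundary analysis in the last step, where one must correctly pin down that exactly one nonzero term ($i=n-r+1$) is discarded when passing from the full convolution to the partial sum. As a fallback that avoids citing the convolution identity, one can reindex by $j=n-2-i$, linearize via $j\binom{j}{r-3}=(r-2)\binom{j}{r-2}+(r-3)\binom{j}{r-3}$, and apply the hockey-stick identity to each resulting sum, but this route is noticeably more computational and I would prefer the convolution approach.
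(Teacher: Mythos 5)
Your proposal is correct and follows essentially the same route as the paper: the identical Pascal's-rule simplification of the right-hand side, the same rearrangement to $\sum_{i=1}^{n-r} i\binom{n-2-i}{r-3} = \binom{n-1}{r-1} - (n-r+1)$, and the same core identity, with the single boundary term $i = n-r+1$ contributing exactly $n-r+1$. The only cosmetic difference is that you cite the standard convolution $\sum_{k=0}^{m}\binom{k}{a}\binom{m-k}{b}=\binom{m+1}{a+b+1}$ as known, whereas the paper proves its $a=1$ instance directly by counting $s$-subsets of $[m]$ according to their second smallest element --- your fallback via the hockey-stick identity is likewise sound but, as you note, unnecessary.
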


\begin{proof}
First, note that $\binom{n}{r} - \binom{n-1}{r} = {n-1 \choose r-1}$ and $\binom{n}{2}-\binom{n-1}{2} = n-1$, so the right-hand side in the formula equals ${n-1 \choose r-1} - (n-1)$. Thus, the identity we need to show is equivalent to 
\[
	\sum_{i=1}^{n-r}i{n -2 - i \choose r-3} = {n-1 \choose r-1} - (n-r+1),
\]
which in turn is equivalent to 
\[
	\sum_{i=1}^{n-r+1}i{n-2-i \choose r-3} = {n-1 \choose r-1}.
\]
By using the substitution $m = n-1$ and $s = r-1$, this is equivalent to 
\[
	\sum_{i=1}^{m-s+1}i{m-1-i \choose s-2} = {m \choose s}.
\]
This last form of the identity can be established by combinatorial considerations: the term $i{m-1-i \choose s-2}$ is precisely the number of ways one can choose a subset of $[m]$ of cardinality $s$ whose second smallest entry is $i+1$. 
\end{proof}

\begin{remark}
The analogue of Proposition~\ref{prop:troplin} for the unweighted dissimilarity map does not hold. As we discussed in Remark~\ref{boccicoolswork}, for the $r$-dissimilarity map we have that $\mathrm{Trop}(\varphi_r)$ is replaced by the Bocci--Cools piecewise linear map $\phi^{(r)}$. Hence, in general $\phi^{(r)}\left(\mathbb{R}^{\binom{n}{2}}\right)$ is not equal to the intersection of tropical hypersurfaces $V^{\mathrm{trop}}(\mathrm{Trop}(\psi_{C,I,J}))$, which is instead a linear subspace of $\mathbb{R}^{\binom{n}{r}}$ as Proposition~\ref{prop:troplin} shows.
\end{remark}

The \emph{three-term Pl\"ucker relations} are the polynomials 
\[x_{ijA}x_{klA} - x_{ikA}x_{jlA} + x_{ilA}x_{jkA}, \]\[ 1 \le i < j < k < l \le n, ~A\in\binom{[n]\setminus\{i,j,k,l\}}{r-2}\]
with the standard convention that index sets are permuted to be increasing and the corresponding permutation signs are included when doing so.  These do not generate the full ideal of Pl\"ucker relations in general, but they do so when passing to the Laurent polynomial ring so they define $\Gr(r,n)^\circ$ in the torus $(\Bbbk^\times)^{\binom{n}{r}}$.  In particular, in Definition \ref{def:Srn} we could have defined the same ideal $\mathcal{J}_{r,n}$ by using only the 3-term Pl\"ucker relations rather than all the Pl\"ucker relations.  

For $r=2$, the three-term Pl\"ucker relations form a tropical basis for the ideal of Pl\"ucker relations \cite[Corollary 4.3.12]{MS15}, which means (1) as already noted, they generate the ideal of Pl\"ucker relations in the Laurent polynomial ring, and (2) the intersection of the tropical hypersurfaces 
\[
	V^{\trop}(\Trop(x_{ij}x_{kl} - x_{ik}x_{jl} + x_{il}x_{jk}))
\]
for $1 \le i < j < k < l \le n$ equals $\Gr^{\trop}(2,n)$ in $\mathbb{R}^{\binom{n}{2}}$.

\begin{theorem}\label{thm:tropicalbasis}
Fix $2 \le r \le n-2$. The three-term Pl\"ucker relations together with the bracket binomials $\psi_{C,I,J}$ form a tropical basis for the ideal $\mathcal{J}_{r,n}$.
\end{theorem}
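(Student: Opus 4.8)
The plan is to verify the two set-theoretic inclusions that constitute the tropical basis property. Write $\mathcal{B}$ for the union of the three-term Pl\"ucker relations and the bracket binomials $\psi_{C,I,J}$; since these already generate $\mathcal{J}_{r,n}$ in the Laurent ring (as observed just before the theorem), it remains to prove the equality
\[
\Trop(V(\mathcal{J}_{r,n})) = \bigcap_{g \in \mathcal{B}} V^{\trop}(\Trop(g)).
\]
One inclusion is automatic: each $g \in \mathcal{B}$ lies in $\mathcal{J}_{r,n}$, so $V(\mathcal{J}_{r,n}) \subseteq V(g)$ and hence $\Trop(V(\mathcal{J}_{r,n})) \subseteq V^{\trop}(\Trop(g))$ (Kapranov's theorem for a single hypersurface plus monotonicity of $\Trop$); intersecting over $g$ gives the inclusion $\subseteq$. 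Moreover, combining Proposition~\ref{prop:ideallocus}, Theorem~\ref{thm:tropim}, and Proposition~\ref{prop:compo} identifies the left-hand side as $\Trop(V(\mathcal{J}_{r,n})) = d^{wt}_r(\mathcal{T}_n) = \Trop(\varphi_r)(\Gr^{\trop}(2,n))$. Thus all the content lies in the reverse inclusion $\supseteq$.

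For the reverse inclusion I would take $w$ in the full intersection. Because $w$ satisfies every $\psi$-hyperplane equation \eqref{eq:linfcns}, Proposition~\ref{prop:troplin} produces a vector $v \in \mathbb{R}^{\binom{n}{2}}$ with $w = \Trop(\varphi_r)(v)$, unique by the injectivity in Lemma~\ref{lem:phiinjective}. It then suffices to prove $v \in \Gr^{\trop}(2,n)$, since this yields $w \in \Trop(\varphi_r)(\Gr^{\trop}(2,n)) = d^{wt}_r(\mathcal{T}_n)$. Using the Speyer--Sturmfels result recalled above (\cite{SS04}, \cite[Corollary 4.3.12]{MS15}), membership $v \in \Gr^{\trop}(2,n)$ is equivalent to $v$ lying in the tropical hypersurface $V^{\trop}(\Trop(x_{ij}x_{kl}-x_{ik}x_{jl}+x_{il}x_{jk}))$ for each $4$-tuple $\{i,j,k,l\}$, i.e.\ that the extremal value among $v_{ij}+v_{kl}$, $v_{ik}+v_{jl}$, $v_{il}+v_{jk}$ is attained at least twice.

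The bridge between the two Grassmannians is a short cancellation computation. Since $r \le n-2$, for each $4$-tuple $\{i,j,k,l\}$ we may choose some $A \in \binom{[n]\setminus\{i,j,k,l\}}{r-2}$ --- the existence of such $A$ is exactly where the hypothesis $r \le n-2$ enters. Expanding the components of $w = \Trop(\varphi_r)(v)$ using Proposition~\ref{prop:tropphi}, a direct computation gives
\[
w_{ijA}+w_{klA} = (v_{ij}+v_{kl}) + c_A,\quad w_{ikA}+w_{jlA} = (v_{ik}+v_{jl}) + c_A,\quad w_{ilA}+w_{jkA} = (v_{il}+v_{jk}) + c_A,
\]
where $c_A = \sum_{x\in\{i,j,k,l\}}\sum_{a\in A} v_{xa} + 2\sum_{\{a,a'\}\subseteq A} v_{aa'}$ is common to all three. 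Because adding a single constant to all three monomials does not change whether the extremal value is attained twice, the tropical three-term Pl\"ucker condition that $w$ satisfies (for this $4$-tuple and this $A$) is equivalent to the corresponding condition on $v$. Ranging over all $4$-tuples then forces $v \in \Gr^{\trop}(2,n)$, completing the reverse inclusion.

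I expect the main obstacle to be not a single deep step but getting the cancellation identity exactly right and confirming that a single choice of $A$ per $4$-tuple suffices; everything else is the assembly of Proposition~\ref{prop:troplin}, Lemma~\ref{lem:phiinjective}, and the $r=2$ tropical basis theorem. Two points deserve care. First, the degenerate cases $r \in \{2,n-2\}$: here $\cS_{r-1,n} = \emptyset$, so Proposition~\ref{prop:troplin} degenerates to $\Trop(\varphi_r)(\mathbb{R}^{\binom{n}{2}}) = \mathbb{R}^{\binom{n}{r}}$ by the bijectivity in Lemma~\ref{lem:phiinjective}, and the reduction to $v$ applies verbatim (for $r=2$ the statement is literally Speyer--Sturmfels). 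Second, a valid $A$ must exist for every $4$-tuple, which is again precisely the condition $r-2 \le n-4$; this both closes the argument and shows the hypothesis $r \le n-2$ is sharp for the method.
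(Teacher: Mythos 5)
Your proposal is correct and follows essentially the same route as the paper: the same reduction via Proposition~\ref{prop:ideallocus}, Theorem~\ref{thm:tropim}, Propositions~\ref{prop:compo} and~\ref{prop:troplin}, followed by transferring the three-term tropical Pl\"ucker conditions through $\varphi_r$ and invoking the $r=2$ tropical basis theorem of Speyer--Sturmfels. Your cancellation identity with the common constant $c_A$ is exactly the tropicalization of the paper's pullback computation $\varphi_r^*(x_{ijA}x_{klA}-x_{ikA}x_{jlA}+x_{ilA}x_{jkA}) = (x_{ij}x_{kl}-x_{ik}x_{jl}+x_{il}x_{jk})\cdot(\text{monomial})$, so the two arguments coincide up to phrasing.
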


\begin{proof} 
We already noted above that these polynomials generate $\mathcal{J}_{r,n}$, since we are working in the Laurent polynomial ring.  So, we just need to show that the intersection of the tropical hypersurfaces defined by these polynomials coincides with the tropicalization of the vanishing locus of the ideal $\mathcal{J}_{r,n}$.  By Proposition \ref{prop:ideallocus} we have $V(\mathcal{J}_{r,n}) = \varphi_r(\Gr(2,n)^\circ)$, and by Theorem~\ref{thm:tropim} we have $\Trop(\varphi_r(\Gr(2,n)^\circ)) = d_r^{wt}(\mathcal{T}_n)$.  Thus, our task is reduced to showing that the intersection of the tropical hypersurfaces associated to the polynomials in the theorem statement equals the space of weighted $r$-dissimilarity vectors  $d_r^{wt}(\mathcal{T}_n)$.  

Proposition~\ref{prop:compo} shows that $d_r^{wt}(\mathcal{T}_n) = \Trop(\varphi_r)\left(\Gr^{\trop}(2,n)\right)$, and Proposition \ref{prop:troplin} shows that the intersection of the tropical hypersurfaces associated to the $\psi_{C,I,J}$ is $\Trop(\varphi_r)\left(\mathbb{R}^{\binom{n}{2}}\right)$.  Thus, all that remains is to show that the pull-backs along $\varphi_r$ of the three-term Pl\"ucker relations define $\Gr^{\trop}(2,n)$; indeed, this suffices since $\varphi_r$ is a monomial map hence pulling back along it commutes with tropicalization.  From the definition of $\varphi_r$ we have
\[
	\varphi_{r}^*(x_{ijA}x_{klA} - x_{ikA}x_{jlA} + x_{ilA}x_{jkA}) = \] \[(x_{ij}x_{kl} - x_{ik}x_{jl} + x_{il}x_{jk})\prod_{B \in \binom{A}{2}}x_B^{2}\prod_{t \in A}x_{it}x_{jt}x_{kt}x_{lt},
\]
so in the Laurent polynomial ring the three-term Pl\"ucker relations for $\Gr(r,n)^\circ$ pull back to the three-term Pl\"ucker relations for $\Gr(2,n)^\circ$, which as we noted above are a tropical basis. 
\end{proof}

\begin{remark}\label{rem:reducedtropicalbasis}
It follows from this proof that not all of the bracket binomials $\psi_{C,I,J}$ are needed to form this tropical basis.  Indeed, the only role they play is cutting out the codimension $\binom{n}{r}-\binom{n}{2}$ linear subspace that is the image of $\Trop(\varphi_r)$, so this codimension is the number that is actually needed if they are chosen correctly.  Similarly, not all the three-term Pl\"ucker relations are needed: for each $4$-tuple $i,j,k,l$, only a single choice of $A\in\binom{[n]\setminus\{i,j,k,l\}}{r-2}$ is needed (and any such choice will do).
\end{remark}

As an immediate corollary, by spelling out explicitly the conditions defining the tropical hypersurfaces for each polynomial in this tropical basis we obtain a characterization of weighted dissimilarity vectors, generalizing the classic tree-metric theorem for $2$-dissimilarity vectors:

\begin{corollary}\label{cor:tropicalbasis}
A vector $w=(w_I)_{I\in \binom{[n]}{r}} \in \mathbb{R}^{\binom{n}{r}}$ is a weighted $r$-dissimilarity vector if and only if the following two conditions hold:
\begin{enumerate}

\item for each $4$-tuple $\{i,j,k,l\} \subseteq [n]$ there exists an $A\subseteq [n]\setminus \{i,j,k,l\}$ of size $r-2$ such that two of the following expressions equal each other and are greater than or equal to the third:
\[w_{ijA} + w_{klA},~w_{ikA}+w_{jlA},~w_{ilA} + w_{jkA};\]

\item for each $I \in \binom{[n]}{6}$, $J\in\binom{[n]\setminus I}{r-3}$, and for each cube $C$ in $I$ with corresponding bipartition $B,W$ we have

\[
\sum_{K \in B}w_{J\sqcup K}=\sum_{K \in W}w_{J\sqcup K}.
\]
\end{enumerate}
\end{corollary}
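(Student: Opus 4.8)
The plan is to read off the two conditions directly from the tropical basis established in Theorem~\ref{thm:tropicalbasis}. Combining that theorem with Proposition~\ref{prop:ideallocus} and Theorem~\ref{thm:tropim}, we know that $d_r^{wt}(\mathcal{T}_n)$ is precisely the intersection of the tropical hypersurfaces cut out by the three-term Pl\"ucker relations and the bracket binomials $\psi_{C,I,J}$. Hence $w$ is a weighted $r$-dissimilarity vector if and only if it lies on every one of these tropical hypersurfaces, and the entire argument reduces to spelling out, for each of the two families of polynomials, what membership in its tropical hypersurface means as an explicit condition on the coordinates $w_I$.

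The binomials are immediate. Each $\psi_{C,I,J} = \prod_{K\in B}m_{J\sqcup K} - \prod_{K\in W}m_{J\sqcup K}$ is a difference of two monomials, so (as already recorded in \eqref{eq:linfcns}) its tropical hypersurface is the classical hyperplane on which the two monomial exponents pair equally with $w$, namely $\sum_{K\in B}w_{J\sqcup K} = \sum_{K\in W}w_{J\sqcup K}$. Ranging over all $I$, all $J$, and all cubes $C$ yields exactly condition~(2). For the three-term relation $x_{ijA}x_{klA} - x_{ikA}x_{jlA} + x_{ilA}x_{jkA}$, tropicalizing over the trivially-valued field gives a tropical trinomial whose hypersurface is the locus where the maximum of the three sums $w_{ijA}+w_{klA}$, $w_{ikA}+w_{jlA}$, $w_{ilA}+w_{jkA}$ is attained at least twice; equivalently, two of them are equal and are $\ge$ the third. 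This is condition~(1) for that particular choice of $A$.

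The one point requiring care is that the tropical basis of Theorem~\ref{thm:tropicalbasis} uses \emph{all} three-term relations, which a priori imposes condition~(1) for \emph{every} $A \in \binom{[n]\setminus\{i,j,k,l\}}{r-2}$, whereas the corollary asserts only the \emph{existence} of a suitable $A$. I would reconcile this in two equivalent ways. First, Remark~\ref{rem:reducedtropicalbasis} already notes that a single choice of $A$ per $4$-tuple suffices to obtain a tropical basis, and that any choice will do, so the ``for all $A$'' and ``there exists $A$'' formulations cut out the same set. Second, and more self-contained, once condition~(2) holds we have $w \in \Trop(\varphi_r)(\mathbb{R}^{\binom{n}{2}})$ by Proposition~\ref{prop:troplin}, say $w = \Trop(\varphi_r)(v)$; a short computation from the formula for $\Trop(\varphi_r)$ in Proposition~\ref{prop:tropphi} shows that the difference $(w_{ijA}+w_{klA}) - (w_{ikA}+w_{jlA})$ is independent of $A$ and equals $(v_{ij}+v_{kl}) - (v_{ik}+v_{jl})$, since the $A$-dependent contributions are identical in the two sums and cancel. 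Thus condition~(1) for one $A$ is equivalent to condition~(1) for all $A$, and each translates to the four-point condition on $v$.

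Putting these together gives the ``if'' direction cleanly: condition~(2) places $w = \Trop(\varphi_r)(v)$ in the image of $\Trop(\varphi_r)$, condition~(1) forces $v$ to satisfy the four-point condition for every $4$-tuple, hence $v \in \Gr^{\trop}(2,n)$ by the Speyer--Sturmfels tree-metric theorem (equivalently, the three-term tropical basis for $\Gr(2,n)$, \cite[Corollary 4.3.12]{MS15}), and then $w = \Trop(\varphi_r)(v) \in \Trop(\varphi_r)(\Gr^{\trop}(2,n)) = d_r^{wt}(\mathcal{T}_n)$ by Proposition~\ref{prop:compo}. The ``only if'' direction is the reverse reading of the same chain. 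I expect the only genuine obstacle to be the bookkeeping in this $A$-independence reconciliation; the translation of each tropical hypersurface into its defining (in)equalities is routine.
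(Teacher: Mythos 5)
Your proposal is correct and follows essentially the same route as the paper: the corollary is read off from Theorem~\ref{thm:tropicalbasis} (via Proposition~\ref{prop:ideallocus}, Theorem~\ref{thm:tropim}, and Proposition~\ref{prop:troplin}) by translating each tropical hypersurface into its defining (in)equalities, with the ``exists $A$'' versus ``for all $A$'' discrepancy handled exactly as in Remark~\ref{rem:reducedtropicalbasis}. Your additional direct computation---that once condition~(2) places $w = \Trop(\varphi_r)(v)$ in the image of $\Trop(\varphi_r)$, the differences $(w_{ijA}+w_{klA})-(w_{ikA}+w_{jlA})$ are independent of $A$---is a nice self-contained justification of the claim that the paper leaves to that remark, but it does not change the structure of the argument.
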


\bibliographystyle{alpha}

\end{document}